\Crefname{algocf}{Algorithm}{Algorithms}
\Crefname{equation}{Equation}{Equations}
\Crefname{figure}{Figure}{Figures}
\newtheorem{theorem}{Theorem}[section]
\newtheorem{lemma}[theorem]{Lemma}
\newtheorem{definition}[theorem]{Definition}
\newtheorem{corollary}[theorem]{Corollary}
\newtheorem{proposition}[theorem]{Proposition}
\theoremstyle{remark}
\newtheorem{remark}[theorem]{Remark}
\newcommand{\mA}{\mathcal{A}}
\newcommand{\mB}{\mathcal{B}}
\newcommand{\mK}{\mathcal{K}}
\newcommand{\mX}{\mathcal{X}}
\newcommand{\mF}{\mathcal{F}}
\newcommand{\mT}{\mathcal{T}}
\newcommand{\mM}{\mathcal{M}}
\newcommand{\mS}{\mathcal{S}}
\newcommand{\RR}{\mathbb{R}}
\newcommand{\NN}{\mathbb{N}}
\newcommand{\ZZ}{\mathbb{Z}}
\newcommand{\FF}{\mathbb{F}}
\newcommand{\bs}{{\bf s}}
\newcommand{\emphdef}[1]{{\emph{#1}}}
\newcommand{\M}{\operatorname{M}}
\newcommand{\Mon}{\operatorname{Mon}}
\def\squaresize{0.4}
\def\padding{0.1}
\newcommand{\drawsquares}[1]{
  \begin{tikzpicture}
    \foreach \color [count=\i] in {#1} {
    \draw[line width=0.5pt, fill=\color] (\i*\squaresize+\i*\padding,0) rectangle ++(\squaresize,\squaresize);
    }
  \end{tikzpicture}
}
\newcommand{\drawemphsquares}[1]{
  \begin{tikzpicture}
    \foreach \color [count=\i] in {#1} {
      \ifnum\i=1
        \draw[line width=1.5pt, fill=\color] (\i*\squaresize+\i*\padding,0) rectangle ++(\squaresize,\squaresize);
      \else
        \draw[line width=0.5pt, fill=\color] (\i*\squaresize+\i*\padding,0) rectangle ++(\squaresize,\squaresize);
      \fi
    }
  \end{tikzpicture}
}
\title{The Rado Multiplicity Problem\\in Vector Spaces over Finite Fields}
\author{
    Juanjo Ru\'e\thanks{
        Departament de Matemàtiques and Institut de Matemàtiques (IMTech) de la Universitat Politècnica de Catalunya (UPC), and Centre de Recerca Matemàtica (CRM), Barcelona,
Spain E-mail: {\tt juan.jose.rue@upc.edu}.
    } \and
    Christoph Spiegel\thanks{
    	Zuse Institute, Department for AI in Society, Science, and Technology and The Berlin Mathematics Research Center MATH+, Berlin, Germany. E-mail: {\tt spiegel@zib.de}.
    } 
}
\begin{document}
\allowdisplaybreaks

\maketitle

\begin{abstract}
    We study an analogue of the Ramsey multiplicity problem for additive structures, in particular establishing the minimum number of monochromatic $3$-APs in $3$-colorings of $\FF_3^n$ as well as obtaining the first non-trivial lower bound for the minimum number of monochromatic $4$-APs in $2$-colorings of $\FF_5^n$. The former parallels results by Cumings et al.~\cite{CummingsEtAl_2013} in extremal graph theory and the latter improves upon results of Saad and Wolf~\cite{SaadWolf_2017}. The lower bounds are notably obtained by extending the flag algebra calculus of Razborov~\cite{razborov2007flag} to additive structures in vector spaces over finite fields.
\end{abstract}

\section{Introduction}

In 1959 Goodman~\cite{Goodman_1959} proved that asymptotically at least a quarter of all vertex triples in any graph must either form a clique or an independent set. This lead to the study of the \emph{Ramsey multiplicity problem}, where one would like to determine the minimum number of monochromatic cliques of prescribed size over any edge-coloring of the complete graph. Ramsey's theorem combined with a basic double-counting argument guarantees that asymptotically some positive fraction of cliques needs to be monochromatic, so one is primarily concerned with determining the precise value of that fraction. Given that the value in Goodman's result is attained by, among other constructions, the random graph $G(n, 1/2)$, Erd\H{o}s~\cite{Erdos_1962} originally conjectured in 1962 that this should hold not just for triangles but for cliques of arbitrary size $k$. This was disproven when Thomason~\cite{Thomason_1989} in 1989 obtained an asymptotic upper bound of $0.936 \cdot 2^{1 - k(k-1)/2}$ for any $k \geq 4$. Somewhat more recently, Conlon~\cite{conlon2007ramsey} established a lower bound of $C^{-k^2 \, (1+o(1))}$ for $C \approx 2.18$. Getting improved asymptotic bounds or determining precise values beyond those of triangles has proven surprisingly challenging, with a long list of results~\cite{Thomason_1989, FranekRodl_1993, Thomason_1997, Giraud_1979, Niess_2012, Sperfeld_2011, GrzesikEtAl_2020, even2015note, parczyk2022new} slowly improving both upper and lower bounds for the specific cases of $k=4$ and $k=5$, but so far falling short of closing the gap.

Recently there has been an increased interest in studying the arithmetic analogue of this type of question, originally initiated when Graham, R\"odl, and Ruczinsky~\cite{GrahamRodlRucinski_1996} gave an asymptotic lower bound for the minimum number of monochromatic Schur triples in $2$-colorings of the first $n$ integers in 1996. The precise asymptotic minimum was determined not much later by Robertson and Zeilberger~\cite{RobertsonZeilberger_1998}, Schoen~\cite{Schoen_1999}, and Datskovsky~\cite{Datskovsky_2003}. The latter in fact also showed the surprising property that the number of Schur triples in any $2$-coloring of $\ZZ_n$ depends only on the cardinalities of the color classes, an observation extended by Cameron, Cilleruelo, and Serra~\cite{CameronCillerueloSerra_2007} to solutions of arbitrary linear equations in an odd number of variables in any finite abelian  group. 
Along similar lines, Parrilo, Robertson and Saracino~\cite{parrilo2008asymptotic} established upper and lower bounds for the minimum number of monochromatic $3$-APs in $2$-colorings of the first $n$ integers, in particular disproving a folklore conjecture that the minimum should be attained by a random coloring.

In this paper we will focus on the analogue of the Ramsey multiplicity problem for specific additive structures in vector spaces over finite fields of small order. We let $q \in \NN$ be a fixed prime power throughout and write $\FF_q$ for the unique finite field with $q$ elements, often also denoted by $\text{GF}(q)$. Given a subset $T \subseteq \FF_q^n$ and a linear map $L$ defined by some matrix $A \in \mM^{r \times m}(\ZZ)$ with integer entries co-prime to $q$\footnote{The methods presented here are certainly also applicable when the coefficients lie in $\FF_q$, as for example considered by Saad and Wolf~\cite{SaadWolf_2017}, Fox, Pham, and Zhao~\cite{FoxPhamZhao_2021}, and Kam\v{c}ev, Liebenau, and Morrison~\cite{KamcevLiebenauMorrison_2021a, KamcevLiebenauMorrison_2021b}, or even more generally when they define arbitrary permutations of $\FF_q$, as for example considered by Cameron, Cilleruelo, and Serra~\cite{CameronCillerueloSerra_2007}.}, we are interested in studying 
\begin{equation}
    \mS_L(T) = \{\bs = (s_1, \ldots, s_m) \in T^m : L(\bs) = A \cdot \bs= {\bf 0} \textrm{ and } s_i \neq s_j \textrm{ for } i \neq j  \},
\end{equation}
that is the set of solutions with all-distinct entries in $T$. Throughout, we will assume that $A$ is of full rank and that $\mS_L(\FF_q^n) \neq \emptyset$. We will also write $s_L(T) = |\mS_L(T)| / |\mS_L(\FF_q^n)|$. Arguably the most relevant additive structures are \emphdef{Schur triples}, that is the linear map $L_\text{Schur}$ satisfying  $\mS_{L_\text{Schur}}(T) = \{ (x, y, z) \in T^3 : x + y = z, x \neq y, x \neq 0, y \neq 0 \}$, as well as $k$-term arithmetic progressions ($k$-APs), that is the linear maps $L_{k\text{-AP}}$ for any $k \geq 3$ satisfying $\mS_{L_{k\text{-AP}}}(T) = \{ (a, a+d, \ldots, a+(k-1)\,d) \in T^k: d \neq 0 \}$.

Writing $[c] = \{1, \ldots, c\}$ for some given number of colors $c \in \NN$, we call $\gamma: \FF_q^n \to [c]$ a \emphdef{$c$-coloring} of dimension $\dim(\gamma) = n$ and write $\gamma^{(i)}$ for the set of elements colored with $1 \leq i \leq c$ as well as $\Gamma_{q,c} (n)$ for the set of all $c$-colorings of $\FF_q^n$. The \emph{Rado multiplicity problem} (to coin a term analogous to that used in graph theory) is concerned with determining
\begin{equation}
    m_{q,c} (L) = \lim_{n \to \infty} \, \min_{\gamma \in \Gamma_{q,c} (n)} s_L(\gamma^{(1)}) + \ldots + s_L(\gamma^{(c)}).
\end{equation}
The limit exists by monotonicity and we have $0 \leq m_{q,c} (L) \leq 1$ by definition. If the requirements of Rado's theorem~\cite{rado1933studien} are met, then we in fact have $m_{q,c} (L) > 0$ and we say that $L$ is \emph{$c$-common} for $q$ if $m_{q,c} (L) = c^{1 - m}$, where we recall that $L$ is defined by a matrix $A$ with $r$ rows and $m$ columns, that is if the minimum number of monochromatic solutions is attained in expectation by a uniform random coloring.
For $r = 1$ the result of Cameron, Cilleruelo, and Serra~\cite{CameronCillerueloSerra_2007} establishes that any $L$ is $2$-common if $m$ is odd.
When $m$ is even, Saad and Wolf~\cite{SaadWolf_2017} showed that any $L$ where the coefficients can be partitioned into pairs, with each pair summing to zero, is $2$-common. Fox, Pham, and Zhao~\cite{FoxPhamZhao_2021} showed that this sufficient condition is in fact also necessary. The case when $r > 1$ is much less understood, with Kam\v{c}ev, Liebenau, and Morrison~\cite{KamcevLiebenauMorrison_2021a} recently characterizing a large family of non-common linear maps by showing that any $L$ that {\lq}induces{\rq} some smaller $2 \times 4$ linear map is uncommon.

Focusing on specific values of $q$, Kr\'al, Lamaison, and Pach~\cite{lamaison2022common} also recently characterized the $2$-common $L$ for $q=2$ when $r=2$ and $m$ is assumed to be odd. When $q=5$, the most relevant additive structures to study is that of $4$-APs~\cite{wolf2010minimum}. Saad and Wolf~\cite{SaadWolf_2017} showed that they are not $2$-common by establishing an upper bound of $1/8 - 7 \cdot 2^{10} \cdot 5^{-2} \approx 0.1247 < 2^{-4}$.
We establish the first non-trivial lower bound for this problem along with a significantly improved upper bound.
\begin{proposition} \label{prop:4apresult}
    We have $1/10 < m_{5,2} (A_{4\textrm{-AP}}) \leq 13/126 = 0.1\overline{031746}$.
\end{proposition}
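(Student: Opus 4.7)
The two inequalities are established by entirely different methods: the upper bound by an explicit small-dimensional construction and the lower bound by the additive flag algebra calculus developed in this paper.

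For the upper bound $m_{5,2}(A_{4\textrm{-AP}}) \le 13/126$, the plan is to exhibit a concrete $2$-coloring of $\FF_5^n$ for some small $n$ whose monochromatic $4$-AP density can be computed exactly. A uniformly random coloring yields only $1/8$ and Saad--Wolf already improved this to $\approx 0.1247$ with a quadratic-type construction, so I would search for colorings improving further among families of algebraic objects (level sets of quadratic forms, indicator functions of cosets of a well-chosen subspace, products of linear forms) as well as by direct computer enumeration in very low dimension. The peculiar denominator $126$ strongly suggests a specific low-dimensional construction whose structure fixes the claimed ratio; once identified, verification of the $4$-AP count reduces to a finite symbolic calculation, and the limit is taken by a standard tensor-power / blow-up argument.

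For the lower bound $m_{5,2}(A_{4\textrm{-AP}}) > 1/10$, I would apply the flag algebra calculus extended to additive structures. The steps are: (i) fix a cut-off dimension $k$ and enumerate the relevant arithmetic flags, namely isomorphism classes of $2$-colorings of $\FF_5^k$ together with a distinguished ``type'' (a partial coloring on a sub-affine-subspace), and define sub-density and gluing operators analogous to those of Razborov; (ii) express the density of monochromatic $4$-APs in any asymptotic coloring as a linear combination of $k$-subflag densities via double counting; (iii) for every type $\sigma$ and every positive semidefinite matrix indexed by flags over $\sigma$, derive a non-negative quadratic form in subflag densities through a Cauchy--Schwarz / averaging argument analogous to the graph setting; (iv) assemble these into a semidefinite program whose optimum is a certified lower bound on $m_{5,2}(A_{4\textrm{-AP}})$, solve it numerically, and round the dual solution to rational entries whose feasibility can be verified exactly, so as to certify the strict bound $> 1/10$.

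The main obstacle is twofold. Conceptually, the additive flag algebra must be set up carefully: one must prove that limit densities are well-defined on cosets of growing subspaces of $\FF_5^n$, that isomorphism-type counts behave as expected under averaging, and that Cauchy--Schwarz positivity is in fact available for arithmetic flags rather than graph flags. Computationally, the challenge is to pick $k$ large enough that the SDP value crosses the $1/10$ threshold, yet small enough for the program to be solvable; and the final rounding of the approximate numerical dual into an exact rational certificate that still certifies the strict inequality is typically the most delicate step of any such flag-algebra argument.
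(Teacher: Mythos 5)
Your overall strategy (explicit construction for the upper bound, SDP-based flag algebra certificate for the lower bound) matches the paper, but there are two concrete points where your plan as written would not deliver the stated result.

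First, for the upper bound, you propose to take the limit ``by a standard tensor-power / blow-up argument.'' The paper's simple blow-up lemma (\cref{lemma:blow_ub}) only yields $\lambda^\star \leq \lambda^d(\gamma)$ for the chosen coloring, and the value $13/126$ is \emph{not} of this form: it is obtained from the \emph{iterated} product construction of \cref{prop:it_ub}, in which the zero vector of a specific $3$-dimensional $2$-coloring of $\FF_5^3$ is recursively replaced by a scaled copy of the coloring itself, and one passes to the limit $\lambda^d(\gamma \otimes \gamma \otimes \cdots)$. A plain blow-up of the same coloring gives a strictly weaker bound, so without this recursive step (and without exhibiting the particular coloring, whose existence is the actual content here) your plan does not reach $13/126$.

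Second, and more seriously, your lower-bound plan cannot certify the \emph{strict} inequality $m_{5,2}(A_{4\text{-AP}}) > 1/10$. A rational sum-of-squares certificate of the form $C^\tau_\lambda - 1/10 \geq \sum \llbracket f_j^2 \rrbracket$ proves only $\geq 1/10$; rounding the dual more carefully does not help if the true SDP optimum at the accessible flag size equals $1/10$, which is the situation here. The paper closes this gap with an additional extremal argument: any limit functional $\phi$ attaining $1/10$ must annihilate every square appearing in the certificate, which forces $\phi(F_{i,1}) = 0$ for both colors and hence $\phi(F_1) = \phi(F_4) = 0$, i.e.\ zero density of monochromatic full $1$-dimensional subspaces; this contradicts $m_{5,2}(L_{5\text{-AP}}) > 0$. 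Some argument of this kind (analysis of the zero set of the slack terms combined with a Rado-type positivity statement) is indispensable for the strictness claim and is missing from your proposal.
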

We would in fact conjecture that the upper bound is tight. Going beyond $4$-APs, we can also show that  $m_{5,2}(A_{5\textrm{-AP}}) \leq 1/126 < 2^{-4}$, establishing that $5$-APs are likewise not $2$-common in $\FF_5$, but in this case did not obtain any meaningful lower bound.

The study of monochromatic structures in colorings with more than two colors has also proven relevant in extremal graph theory. Most notably, Cummings et al.~\cite{CummingsEtAl_2013} extended the results of Goodman~\cite{Goodman_1959} by establishing the exact Ramsey multiplicity of triangles in $3$-colorings and showing that they are not $3$-common despite being $2$-common. We consider a similar question in the additive setting and establish the exact Rado multiplicity of $3$-APs in $3$-colorings of $\FF_3^n$, likewise showing that they are not $3$-common.
\begin{theorem} \label{thm:3ap3colorresult}
    We have $m_{3,3} (A_{3\textrm{-AP}}) = 1/27$.
\end{theorem}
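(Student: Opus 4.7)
The plan is to establish matching upper and lower bounds. For the upper bound $m_{3,3}(A_{3\textrm{-AP}}) \leq 1/27$, I would construct an explicit 3-coloring of $\FF_3^n$ by pulling back a 3-coloring of $\FF_3^3$ along the linear surjection $\pi : \FF_3^n \to \FF_3^3$ projecting onto the first three coordinates. Concretely, set $\gamma(x) = x_3 - x_1^2 - x_2^2 \in \FF_3$; the corresponding color classes in $\FF_3^3$ are the level sets $C_i = \{(a,b,c) \in \FF_3^3 : c - a^2 - b^2 = i\}$ for $i \in \FF_3$, each of size $9$. A direct verification using the identity $u^2 + uv + v^2 = 0$ in $\FF_3$ if and only if $u = v$ shows that each $C_i$ contains no non-trivial 3-AP, so any monochromatic 3-AP in $\gamma$ must project to a single point of $\FF_3^3$ and hence live inside a single coset of $\ker \pi$. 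Counting these trivial monochromatic 3-APs yields density $(3^{n-3}-1)/(3^n-1) \to 1/27$ as $n \to \infty$, establishing the upper bound.

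For the lower bound $m_{3,3}(A_{3\textrm{-AP}}) \geq 1/27$, I would apply the flag algebra framework for additive structures in vector spaces over finite fields that this paper develops. Concretely, the approach is to enumerate a finite collection of \emph{arithmetic flags}---small, partially 3-colored configurations rooted at a distinguished tuple---together with their asymptotic embedding densities, extract the polynomial identities and Cauchy--Schwarz inequalities among these densities that every 3-coloring of $\FF_3^n$ must satisfy, and assemble them into a semidefinite program whose optimum is a provable lower bound on $m_{3,3}(A_{3\textrm{-AP}})$. Solving the SDP numerically for a sufficiently rich flag collection should produce a bound arbitrarily close to $1/27$, and the proof is then completed by converting the floating-point optimum into an exact rational dual certificate. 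A purely Fourier-analytic bound of the form $\sum_i \alpha_i^3 + 3\sum_{y \neq 0}\hat{1}_{A_1}(y)\hat{1}_{A_2}(y)\hat{1}_{A_3}(y)$ seems insufficient on its own because the cubic error term can be large and negative, which motivates the higher-order SDP relaxation.

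The main obstacle will be the rounding step. Because the construction above is essentially unique up to invertible affine transformations of $\FF_3^n$, the extremal SDP dual matrix should inherit the symmetry group of the underlying quadric $C_0 \subset \FF_3^3$ and therefore have a large structured kernel. Identifying this kernel explicitly and rationalizing the remaining positive directions, while preserving both positive semidefiniteness and every equality constraint at the target value $1/27$, is technically delicate, but the tight numerical match with the upper bound construction together with the rigidity of the extremal configuration strongly suggest that the standard flag algebra rounding procedure can be made to produce an exact certificate for the bound $\geq 1/27$, thereby closing the gap and proving the theorem.
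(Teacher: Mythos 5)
Your upper bound is essentially the paper's argument: the paper applies its blow-up lemma (\cref{lemma:blow_ub}) to a $3$-coloring of $\FF_3^3$ that avoids all non-degenerate monochromatic $3$-APs, so that in the limit only the ``degenerate'' progressions with difference in the kernel of the projection survive, giving density $1/27$. Your pullback along $\pi:\FF_3^n\to\FF_3^3$ is exactly this blow-up, and your explicit realization of the AP-free coloring by the level sets of the quadric $c-a^2-b^2$ is correct (a line $x+td$ stays in a level set only if $d_1^2+d_2^2=0$, which over $\FF_3$ forces $d_1=d_2=0$ and then $d_3=0$), as is the count $(3^{n-3}-1)/(3^n-1)\to 1/27$. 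This part is fine and matches the paper.

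The lower bound, however, is where the actual content of the theorem lies, and there you have only described a methodology, not given a proof. You correctly identify that a first-moment/Fourier argument is insufficient and that the flag-algebra/SDP route developed in this paper is the right tool, but the paper's proof of $m_{3,3}(A_{3\text{-AP}})\ge 1/27$ \emph{is} the explicit rational sum-of-squares certificate: four squares of linear combinations of the five $1$-dimensional flags rooted at a single colored point, with explicitly stated rational coefficients, whose downward projections are verified to be dominated by $F_i-1/27$ over all $140$ unfixed $3$-colorings of $\FF_3^2$. Your proposal stops at ``solving the SDP \ldots should produce a bound arbitrarily close to $1/27$'' and ``the standard rounding procedure can be made to produce an exact certificate.'' Neither claim is automatic: there is no a priori guarantee that the flag-algebra hierarchy attains the value $1/27$ exactly at any fixed finite level (here it happens to do so already at dimension $N=2$, which is itself a nontrivial fact one must exhibit), and the exact rational certificate is precisely the object that must be produced and checked. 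As written, the lower bound --- and hence the equality $m_{3,3}(A_{3\text{-AP}})=1/27$ --- remains unproven in your proposal.
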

We can also show that $0.041258 \leq m_{2,3}(A_\text{Schur}) = 1/16 < 3^{-2}$ as well as $m_{3,3} (A_\text{Schur}) \leq 7/81 < 3^{-2}$, establishing that Schur triples are also not $3$-common for $q = 2$ and $q = 3$.

\medskip

Upper bounds of all results are obtained through explicit blowup-type constructions. Lower bounds in the graph theoretic setting have recently been obtained through a computational approach relying on flag algebras due to Razborov~\cite{razborov2007flag}. This approach has been extended to different contexts, but so far seems to not have been explored in the arithmetic setting. We take a first step in that direction by developing the required theory in the finite-field model and applying it to obtain the above mentioned results.

\paragraph{Outline} We develop the necessary basic notions for colorings of vector spaces over finite fields and how to count solutions in them in \cref{sec:fundamentals}. We then formally introduce the flag algebras along with a basic approach for obtaining upper bounds through blow-up constructions in \cref{sec:flagalgebras}. Relying on the previous sections, we establish how the results presented in the introduction were obtained in \cref{sec:proofs}. Finally, \cref{sec:remarks} contains some concluding remarks and possible directions for future research.

\section{Isomorphisms and double counting} \label{sec:fundamentals}

Since $q$ will be an arbitrary but fixed prime power  and $c$ and arbitrary but fixed integer throughout this section as well as the following two, we will in general omit both from notation, so in particular we write $\Gamma (n) = \Gamma_{q,c} (n)$ for the set of all $c$-colorings of dimension $n$. We will also write  $\Gamma = \bigcup_{n = 0}^\infty \Gamma(n)$ and let the $0$-dimensional vector space consist of a single point, that is $\FF_q^0 = \{0\}$, and consequently $|\Gamma (0)| = c$ regardless of $q$.

\subsection{A notion of partially fixed morphisms for vector spaces}

Let us write $e_j$ for the $j$-th canonical unit basis vector of $\FF_q^n$ for $1 \leq j \leq n$ as well as, somewhat unconventionally but notationally convenient, $e_0$ for the zero vector.

\begin{definition}
    We refer to an affine linear map $\varphi: \FF_q^k \to \FF_q^n$ as a \emphdef{morphism} and say that it is \emphdef{$t$-fixed} for some $t \geq 0$ if $\varphi(e_j) = e_j$ for all $0 \leq j \leq t$. A morphism is a \emphdef{monomorphism} whenever it is injective and a monomorphism is an \emphdef{isomorphism} whenever $n = k$.
\end{definition}

For notational convenience, we extend the range of $t$ to $-1$ in order to include \emphdef{unfixed} morphisms and will always use $t^+$ to denote $\max\{t,0\}$. For a given $t \geq -1$ and $n, k \geq t^+$, we let $\M_t(k;n)$ denote the set of $t$-fixed morphisms from $\FF_q^k$ to $\FF_q^n$ up to $t$-fixed isomorphism of $\FF_q^k$. When $n \geq k$, we likewise write $\Mon_t(k;n)$ for the set of monomorphisms with the same properties.
We will also refer to the image of an element of $\Mon_t(k;n)$ as a \emphdef{$t$-fixed $k$-dimensional subspace} of $\FF_q^n$, so that $0$-fixed subspaces correspond to linear subspaces and unfixed subspaces correspond to affine subspaces. Let us generalize our notation a bit further: given $k_1, \ldots, k_m \geq t^+$ and $n \geq k_1 + \ldots + k_m - (m-1) \, t^+$, we let $\Mon_t(k_1, \ldots, k_m; n)$ denote the set of all tuples of monomorphisms $(\varphi_1, \ldots, \varphi_m) \in \Mon_t(k_1; n) \times \ldots \times \Mon_t(k_m; n)$ only overlapping in the $t$-fixed subspace, that is they satisfy 
\begin{equation*}
    \varphi_i(\FF_q^{k_i}) \cap \varphi_j(\FF_q^{k_j}) =
        \begin{cases}
        \operatorname{id}_{t,n} (\FF_q^t) & \text{ for } t \geq 0,\\
        \emptyset & \text{ for } t = -1,
        \end{cases}
\end{equation*}
for any $1 \leq i < j \leq m$, where we let $\operatorname{id}_{t,n}$ denote the unique $t$-fixed isomorphism from $\FF_q^t$ to $\FF_q^n$.

In order to count monomorphisms, we will need to define a $q$-analog of the multinomial coefficient in the form of the Gaussian multinomial coefficient.
Briefly assuming $q \geq 1$ to be real, we write $[n]_q=\sum_{i=0}^{n-1}q^i$ for the \emphdef{$q$-number} of $n$, where we note that $[n]_q = (1-q^n) / (1-q)$ for $q \neq 1$ and $[n]_1 = \lim_{q \to 1} [n]_q = n$, as well as $[n]_q! = [n]_q \cdots [2]_q \, [1]_q$ for the \emphdef{$q$-factorial} of $n$. The \emphdef{Gaussian multinomial coefficient} can now be defined for any $k_1, \ldots, k_m \geq 0$ and $n \geq k' = k_1 + \ldots + k_m$ as
\begin{equation*}
    \binom{n}{k_1, \ldots, k_m}_q = \frac{[n]_q!}{[k_1]_q! \, \cdots \, [k_m]_q! \, [n-k']_q!}.
\end{equation*}
\begin{lemma} \label{lemma:count_morphisms}
    For any integers $0 \leq k_1, \ldots, k_m$ and $n \geq k' = k_1 + \ldots + k_m$, we have
    \begin{equation*}
        |\Mon_{-1}(k_1, \ldots, k_m;n)| = q^{n - k'} \, \binom{n}{k_1, \ldots, k_m}_q.
    \end{equation*}
    For integers $0 \leq t \leq k_1, \ldots, k_m$ and $n \geq k' = k_1 + \ldots + k_m - (m-1) \, t$, we have
    \begin{equation*}
        |\Mon_t(k_1, \ldots, k_m; n)| = \binom{n-t}{k_1-t,\ldots,k_m-t}_q.
    \end{equation*}
\end{lemma}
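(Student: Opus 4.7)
The plan is to handle the single-monomorphism case ($m=1$) first in both settings, and then extend to $m>1$ by an iterative argument that adds one monomorphism at a time, with the product of successive choices telescoping to the claimed Gaussian multinomial coefficient via the standard identity $\binom{N}{a_1,\ldots,a_m}_q = \prod_{i=1}^m \binom{N - \sum_{j<i} a_j}{a_i}_q$.

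For $m=1$ and $t \geq 0$, I would first argue that a $t$-fixed monomorphism $\varphi\colon \FF_q^k \to \FF_q^n$, considered up to $t$-fixed isomorphism of the source, is determined by its image, namely a linear $k$-dimensional subspace of $\FF_q^n$ containing $V_t = \operatorname{span}(e_1,\ldots,e_t)$. Passing to the quotient $\FF_q^n / V_t \cong \FF_q^{n-t}$, such subspaces correspond bijectively to $(k-t)$-dimensional linear subspaces of $\FF_q^{n-t}$, which are counted by $\binom{n-t}{k-t}_q$. For $m=1$ and $t=-1$, an affine monomorphism modulo affine isomorphism of the source is an affine $k$-subspace; since each linear $k$-subspace has $q^{n-k}$ cosets in $\FF_q^n$, the count is $q^{n-k}\binom{n}{k}_q$. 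This matches the two formulas at $m=1$.

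For the general $m>1$ case, my approach would be inductive. Having already selected $\varphi_1,\ldots,\varphi_{i-1}$, the constraint that $\varphi_i$'s image overlaps the previous images only in the $t$-fixed subspace forces $\varphi_i$, after quotienting by $V_t$ (for $t \geq 0$) or after suitable translation (for $t=-1$), to live in the complement of a subspace whose dimension grows by $k_{i-1} - t^+$ at each step. The number of valid choices at step $i$ is then a Gaussian binomial in the remaining dimension, and taking the product over all $i$ yields precisely the iterative expansion of the Gaussian multinomial. In the $t=-1$ setting, the accumulated translational freedom contributes the overall $q^{n-k'}$ factor.

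The main obstacle I anticipate is verifying that the successive counts really do combine cleanly, i.e., that no stray powers of $q$ arise from the \emph{disjointness-beyond-$V_t$} constraints that would spoil the telescoping. This is most delicate for $t=-1$, where the affine structure means one has to be careful about when the freedom is a coset-choice (contributing a power of $q$) versus a subspace choice (contributing a Gaussian binomial factor), and it requires checking that each $q$-exponent that appears step-by-step aggregates to exactly $n-k'$. For $t \geq 0$ the analogous subtle point is ensuring that the pairwise intersection condition is compatible with the ``directly summing over $V_t$'' behavior implicitly built into the product expansion of $\binom{n-t}{k_1-t,\ldots,k_m-t}_q$.
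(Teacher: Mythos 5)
Your treatment of the $m=1$ cases matches the paper's: identify a $t$-fixed monomorphism up to source isomorphism with its image, count $k$-dimensional linear subspaces by choosing independent vectors and dividing by the isomorphisms of the source to get $\binom{n}{k}_q$, pass to the quotient by the fixed subspace for $t\geq 0$, and multiply by the $q^{n-k}$ cosets in the affine case. For $m>1$ the paper also iterates, splitting off one subspace at a time, so your overall strategy is the same as the paper's.

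The problem is that you have deferred exactly the step that carries all of the content for $m>1$, and that step is not a formality. At stage $i$ (take $t\geq 0$ and work in $\FF_q^{n-t}$ after quotienting by the fixed subspace) you must count the $(k_i-t)$-dimensional subspaces meeting the span $U$ of the previously chosen subspaces, of dimension $K_{i-1}=\sum_{j<i}(k_j-t)$, only in $0$. That number is $q^{K_{i-1}(k_i-t)}\binom{n-t-K_{i-1}}{k_i-t}_q$ rather than $\binom{n-t-K_{i-1}}{k_i-t}_q$: choosing $k_i-t$ independent vectors whose partial spans avoid $U$ gives $\prod_{j=0}^{k_i-t-1}\bigl(q^{n-t}-q^{K_{i-1}+j}\bigr)$ options, and dividing by the number of ordered bases of the image leaves the factor $q^{K_{i-1}(k_i-t)}$. (Already for $q=2$, $t=0$, $k_1=k_2=1$, $n=2$ there are $3\cdot 2=6$ ordered pairs of distinct lines, while $\binom{2}{1,1}_2=3$.) So the ``stray powers of $q$'' you flag as a worry genuinely do appear, the naive telescoping yields an extra factor $q^{\sum_{i<j}(k_i-t)(k_j-t)}$ in front of the Gaussian multinomial, and your argument as written neither tracks this factor nor explains how it is absorbed by the equivalence under which the tuples are being counted. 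Until you carry out this computation explicitly and reconcile it with the stated formula (or replace the complement count by a genuine bijection onto a flag of quotients), the $m>1$ case is not proved; the same issue recurs in the $t=-1$ case on top of the bookkeeping of the translational factors.
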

\begin{proof}
    The Gaussian \emph{binomial} coefficient $\binom{n}{k}_q$ counts the number of $0$-fixed $k$-dimensional subspaces of $\FF_q^n$. To see that this is true, note that there are exactly $(q^n - 1) \, (q^{n-1} - 1) \cdots (q^{n-k+1} - 1)$ ways of choosing $k$ linearly independent vectors in $\FF_q^n$ and accounting for isomorphisms in $\FF_q^k$ gives the required denominator of $(q^k - 1) \, (q^{k-1} - 1) \cdots (q - 1)$. Noting that
    \begin{align*}
        |\Mon_t(k_1, \ldots, k_m;n)| & = \Mon_0(k_1-t, \ldots, k_m-t;n-t)| \\
        & = |\Mon_0(k_1-t;n-t)| \, |\Mon_0(k_2-t, \ldots, k_m-t;n-k_1)|
    \end{align*}
    and iterating gives the desired equality for $|\Mon_t(k_1,\ldots,k_m;n)|$ when $t \geq 0$. The formula for $t=-1$ follows by noting that $\FF_q^n$ contains exactly $q^{n-k'}$ affine copies of a given $k'$-dimensional subspace.
\end{proof}

\begin{corollary} \label{eq:mon_doble_counting}
    We have
    \begin{equation*} 
         |\Mon_t(k_1, \ldots, k_m; n')| \, |\Mon_t(n'; n)| = |\Mon_t(k_1, \ldots, k_m; n)| \, \binom{n-k'}{n' - k'}_q
    \end{equation*}
    for any $t \geq -1$, $k_1, \ldots, k_m \geq t^+$, and $n \geq n' \geq k' = k_1 + \ldots + k_m - (m-1) \, t^+$.
\end{corollary}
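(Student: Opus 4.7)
The plan is to prove the identity by direct substitution from \cref{lemma:count_morphisms}, splitting into the cases $t \geq 0$ and $t = -1$, since the formulas for $|\Mon_t(\cdot)|$ differ between the two regimes. In either case, the corollary reduces to a routine algebraic identity among $q$-factorials, which stems from the well-known fact that $\binom{a+b}{a}_q \, [a]_q! \, [b]_q! = [a+b]_q!$.

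For $t \geq 0$, the first step is to substitute $|\Mon_t(k_1, \ldots, k_m; N)| = \binom{N-t}{k_1-t, \ldots, k_m-t}_q$ for $N \in \{n, n'\}$ together with $|\Mon_t(n'; n)| = \binom{n-t}{n'-t}_q$ into both sides, expand every Gaussian multinomial as a quotient of $q$-factorials, and simplify using the identity $n' - t - \sum_i (k_i - t) = n' - k'$. Both sides then collapse to the common expression
\begin{equation*}
    \frac{[n-t]_q!}{\prod_i [k_i-t]_q! \cdot [n'-k']_q! \cdot [n-n']_q!}.
\end{equation*}

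For $t = -1$, I would analogously substitute $|\Mon_{-1}(k_1, \ldots, k_m; N)| = q^{N - k'} \binom{N}{k_1, \ldots, k_m}_q$ and $|\Mon_{-1}(n'; n)| = q^{n - n'} \binom{n}{n'}_q$. The $q$-prefactors on the LHS combine as $q^{n'-k'} \cdot q^{n-n'} = q^{n-k'}$, matching the prefactor on the RHS, and what remains is exactly the same $q$-multinomial identity as in the $t \geq 0$ case (with $t$ set to $0$).

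There is no real obstacle here: once the formulas from \cref{lemma:count_morphisms} are accepted, the corollary is pure bookkeeping, and the main thing to watch is that the indices $n' - k'$, $n - k'$, and $n - n'$ line up correctly in both expansions. A more conceptual alternative for $t \geq 0$ would be to double-count pairs $(\vec\varphi, \psi) \in \Mon_t(k_1, \ldots, k_m; n) \times \Mon_t(n'; n)$ satisfying $\varphi_i(\FF_q^{k_i}) \subseteq \psi(\FF_q^{n'})$ for all $i$: summing first over $\psi$ yields the LHS, while summing first over $\vec\varphi$ yields the RHS, using that the images jointly span a unique $t$-fixed $k'$-dimensional subspace sitting inside exactly $\binom{n-k'}{n'-k'}_q$ many $t$-fixed $n'$-dimensional subspaces of $\FF_q^n$. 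The $t = -1$ variant of this combinatorial interpretation is subtler, since the disjoint union of the affine images need not be contained in a canonical $k'$-dimensional affine subspace, so the uniform algebraic approach remains cleanest.
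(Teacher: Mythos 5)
Your proposal is correct and is essentially the intended argument: the paper states this as an immediate corollary of \cref{lemma:count_morphisms} without writing out a proof, and the direct substitution of the $q$-multinomial formulas (separately for $t \geq 0$ and $t = -1$, with the $q$-power prefactors combining as you describe) is exactly the verification being left to the reader. Your algebra checks out in both cases, and the double-counting interpretation you sketch for $t \geq 0$ is the same mechanism the paper later exploits in \cref{lemma:colordoublecounting}.
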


\subsection{Isomorphisms and densities for colorings}

We say two colorings $\gamma_1, \gamma_2 \in \Gamma(n)$ are \emphdef{$t$-fixed isomorphic} for some $t \geq -1$, denoted by $\gamma_1 \cong_t \gamma_2$, if there exists a $t$-fixed isomorphism $\varphi: \FF_q^n \to \FF_q^n$ satisfying $\gamma_1 \equiv \gamma_2 \circ \varphi$. We let
\begin{equation}
    \Gamma^{t}(n) = \Gamma(n) / \cong_{t}   
\end{equation}
denote the set of all $c$-colorings of $\FF_q^n$ up to $t$-fixed isomorphism, i.e., we pick one canonical but arbitrary representative for each equivalence class. We will also write $\Gamma^t = \bigcup_{n=t^+}^\infty \Gamma^t(n)$ for the set of all colorings of any dimension up to $t$-fixed isomorphism.
Given two colorings $\delta \in \Gamma^{t}(k)$ and $\gamma \in \Gamma^{t}(n)$ with $n \geq k \geq t^+$, we say that a $t$-fixed monomorphism $\varphi \in \M_t(k;n)$ \emphdef{induces a copy} of $\delta$ in $\gamma$ and that $\delta$ is a \emphdef{sub-coloring} of $\gamma$ if $\delta \cong_t \gamma \circ \varphi$. When replacing the monomorphism with a not-necessarily-injective morphism, we say that it induces a \emphdef{degenerate} copy and that $\delta$ is a \emphdef{degenerate} sub-coloring.

\begin{table}[h!]
\begin{center}
\begin{tabular}{ c | c c c c c }
$q / n$ & 1 & 2 & 3 & 4 & 5\\ \hline
 2 & 3 & 5 & 10 & 32 & 382 \\ 
 3 & 4 & 14 & 1028  &  &  \\  
 4 & 8 & 1648 \\
 5 & 6 & 3324 &  &  & 
\end{tabular}
\hspace{1em}
\begin{tabular}{ c | c c c c c }
$q / n$ & 1 & 2 & 3 & 4 & 5\\ \hline
 2 & 4 & 8 & 20 & 92 & 2744 \\ 
 3 & 6 & 36 & 15\,636 &  &  \\  
 4 & 14 & 7724 & \\
 5 & 12 & 72\,192 &  &  & 
\end{tabular}
\end{center}
\caption{Number of $2$-colorings of $\FF_q^n$ up to unfixed (left) and $0$-fixed (right) isomorphism. Note that the cardinality of $\Gamma_{2,2}^t(n)$ is respectively given by the OEIS sequences {\tt A000585} and {\tt A000214}.}\label{table:nr2colorings}
\end{table}

\begin{table}[h!]
\begin{center}
\begin{tabular}{ c | c c c c }
$q / n$ & 1 & 2 & 3 & 4 \\ \hline
 2 & 6 & 15 & 60 & 996 \\ 
 3 & 10 & 140 & 25\,665\,178 &   \\  
 4 & 30 & 1\,630\,868 \\
 5 & 24 & 70\,793\,574 &  & 
\end{tabular}
\hspace{1em}
\begin{tabular}{ c | c c c c }
$q / n$ & 1 & 2 & 3 & 4 \\ \hline
 2 & 9 & 30 & 180 & 6546  \\ 
 3 & 18 & 648 &  &    \\  
 4 & 69 & 8\,451\,708 \\
 5 & 72 &  &  & 
\end{tabular}
\end{center}
\caption{Number of $3$-colorings of $\FF_q^n$ up to unfixed (left) and $0$-fixed (right) isomorphism.}\label{table:nr3colorings}
\end{table}

Given $k_1, \ldots, k_m \geq t^+$ and $n \geq k_1 + \ldots + k_m - (m-1) \, t^+$, the \emphdef{density} of some colorings $\delta_1 \in \Gamma^t(k_1)$, ..., $\delta_m \in \Gamma^t(k_m)$ in $\gamma \in \Gamma^t(n)$ is now defined as the probability that a a tuple of $t$-fixed monomorphism chosen uniformly at random from $\Mon_t(k_1, \ldots, k_m; n)$ induces copies of $\delta_1, \ldots, \delta_m$ in $\gamma$, that is  
\begin{equation}
    p_t (\delta_1, \ldots, \delta_m; \gamma) = \frac{|\{ (\varphi_1, \ldots, \varphi_m)  \in \Mon_t(k_1,\ldots,k_m;n) : \gamma \circ \varphi_i \cong_t \delta_i \text{ for all } 1\leq i \leq m\}|}{|\Mon_t(k_1,\ldots,k_m;n)|}.
\end{equation}
For $n, k \geq t^+$, we also let the \emphdef{degenerate density} of some $\delta \in \Gamma^t(k)$ in $\gamma$ denote the probability that a not-necessarily-injective $t$-fixed morphism does the same, that is 
\begin{equation}
    p_t^d (\delta; \gamma) = |\{ \varphi \in \M_t(k;n) : \gamma \circ \varphi \cong_t \delta \}| \, / \, |\M_t(k;n)|.
\end{equation}
The following lemma states that asymptotically the two notions converge to each other.
\begin{lemma} \label{lemma:asymptoticdensity}
    For any $t \geq -1$, $k \geq t^+$, $\delta \in \Gamma^t(k)$ and $\gamma \in \Gamma^t(n)$ with $n \to \infty$, we have
    \begin{equation}
        p_t^d(\delta; \gamma) = p_t(\delta; \gamma) \, (1+o_n(1)).
    \end{equation}
\end{lemma}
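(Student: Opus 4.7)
The plan is to show that non-injective $t$-fixed morphisms form an asymptotically negligible fraction of $\M_t(k;n)$ as $n \to \infty$, which then lets me compare the degenerate and injective densities directly.

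First I would count $t$-fixed morphisms $\varphi \colon \FF_q^k \to \FF_q^n$ \emph{before} quotienting by the automorphism group of $\FF_q^k$. For $t \geq 0$ these are linear maps with the first $t$ columns prescribed as $e_1, \ldots, e_t$, giving $q^{n(k-t)}$ in total and $\prod_{j=t}^{k-1}(q^n - q^j)$ injective ones; for $t = -1$ both counts acquire an extra factor $q^n$ corresponding to the translation. In either case the ratio of injective to all morphisms equals $\prod_{j=t^+}^{k-1}(1 - q^{j-n}) = 1 - O(q^{-n})$. Letting $G$ denote the finite group of $t$-fixed automorphisms of $\FF_q^k$, whose order depends only on $q, k, t$, both $\M_t(k;n)$ and $\Mon_t(k;n)$ arise as orbit spaces of the sets above under the right action of $G$. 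Every orbit has size at most $|G|$, while monomorphism orbits have size exactly $|G|$, and combining these facts yields
\[
\frac{|\M_t(k;n)| - |\Mon_t(k;n)|}{|\Mon_t(k;n)|} \;\leq\; |G| \cdot \left( \prod_{j=t^+}^{k-1}(1 - q^{j-n})^{-1} - 1 \right) \;=\; O(q^{-n}).
\]

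The lemma then follows by splitting the count of $\varphi \in \M_t(k;n)$ satisfying $\gamma \circ \varphi \cong_t \delta$ according to injectivity: the injective contribution equals $p_t(\delta;\gamma) \cdot |\Mon_t(k;n)|$ and the non-injective one is bounded above by $|\M_t(k;n)| - |\Mon_t(k;n)|$. Dividing by $|\M_t(k;n)|$ and inserting the bound above gives $p_t^d(\delta;\gamma) = p_t(\delta;\gamma) + O(q^{-n})$, which is the required asymptotic (and in particular implies $p_t^d = p_t (1 + o_n(1))$ whenever $p_t$ is bounded away from zero).

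The main obstacle is the quotient step, since non-injective morphisms have strictly larger $G$-stabilizers than injective ones, so their orbit sizes vary and one cannot simply divide a naive ratio. The resolution is the crude uniform bound that no orbit exceeds size $|G|$, which is sufficient because $|G|$ is a constant independent of $n$.
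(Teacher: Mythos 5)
Your proposal is correct and follows essentially the same route as the paper, whose entire proof is the one-line observation that $|\M_t(k;n)| = |\Mon_t(k;n)|\,(1+o_n(1))$; you simply supply the details of that count (including the orbit-size subtlety in passing to the quotient by $t$-fixed automorphisms, which the paper glosses over) and then split the degenerate count by injectivity. Your closing caveat that the argument literally yields the additive estimate $p_t^d = p_t + O(q^{-n})$ rather than the multiplicative form when $p_t$ can vanish is a fair and slightly more careful reading than the paper's own.
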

\begin{proof}
    This follows immediately by noting that $|\M_t(k,n)| = |\Mon_t(k,n)| \, (1+o_n(1))$, that is as $n$ becomes large enough we expect the average morphism to be injective.
\end{proof}

The following now establishes a simple but fundamental averaging equality for the non-degenerate notion of density.
\begin{lemma} \label{lemma:colordoublecounting}
    For any $t \geq -1$, $k_1, \ldots, k_m \geq t^+$, $n \geq n' \geq k' = k_1 + \ldots + k_m - (m-1) \, t^+$, and colorings $\delta_1 \in \Gamma^t(k_1)$, ..., $\delta_m \in \Gamma^t(k_m)$, $\gamma \in \Gamma^t(n)$, we have
    \begin{equation}
        p_t(\delta_1, \ldots, \delta_m; \gamma) = \sum_{\beta \in \Gamma^t(n')} p_t(\delta_1, \ldots, \delta_m; \beta) \, p_t(\beta; \gamma).
    \end{equation}
\end{lemma}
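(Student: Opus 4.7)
The plan is a double-counting argument that invokes Corollary 2.3 both to collapse a numerator and to simplify a denominator. Writing $N(\delta_1, \ldots, \delta_m; \gamma) = |\{\bar\varphi \in \Mon_t(k_1, \ldots, k_m; n) : \gamma \circ \varphi_i \cong_t \delta_i \text{ for all } i\}|$ so that $p_t(\delta_1, \ldots, \delta_m; \gamma) = N(\delta_1, \ldots, \delta_m; \gamma) / |\Mon_t(k_1, \ldots, k_m; n)|$, I would expand the right-hand side using the definitions. Its numerator, after summing over $\beta$, counts triples $(\bar\varphi, \psi, \beta)$ with $\bar\varphi \in \Mon_t(k_1, \ldots, k_m; n')$, $\psi \in \Mon_t(n'; n)$, and $\beta \in \Gamma^t(n')$ such that $\beta \circ \varphi_i \cong_t \delta_i$ and $\gamma \circ \psi \cong_t \beta$. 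Since $\beta$ is uniquely determined by the class of $\gamma \circ \psi$, this collapses to the count of pairs $(\bar\varphi, \psi)$ satisfying $\gamma \circ (\psi \circ \varphi_i) \cong_t \delta_i$ for all $i$.

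The central combinatorial step would be to establish a bijection between such pairs $(\bar\varphi, \psi)$ and pairs $(\bar\varphi', V)$, where $\bar\varphi' \in \Mon_t(k_1, \ldots, k_m; n)$ satisfies $\gamma \circ \varphi'_i \cong_t \delta_i$ and $V$ is an $n'$-dimensional $t$-fixed subspace of $\FF_q^n$ containing the joint image $\varphi'_1(\FF_q^{k_1}) \cup \ldots \cup \varphi'_m(\FF_q^{k_m})$. The forward direction sends $(\bar\varphi, \psi) \mapsto (\psi \circ \bar\varphi, \operatorname{image}(\psi))$, and for the inverse I would pick any representative $\psi_V$ of the class of $\psi$ with image $V$ and set $\bar\varphi = \psi_V^{-1} \circ \bar\varphi'$; this is well-defined modulo the source-automorphism quotient defining $\Mon_t$, because a different representative of $\psi_V$ differs by a $t$-fixed automorphism of $\FF_q^{n'}$ that gets absorbed into the class of $\bar\varphi$.

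Finally, I would observe that for each fixed $\bar\varphi'$ the number of valid $V$ is exactly $\binom{n-k'}{n'-k'}_q$, which is the combinatorial content of Corollary 2.3 read in this direction, so the numerator of the right-hand side equals $N(\delta_1, \ldots, \delta_m; \gamma) \cdot \binom{n-k'}{n'-k'}_q$. A second application of Corollary 2.3 rewrites the product of denominators as $|\Mon_t(k_1, \ldots, k_m; n')| \cdot |\Mon_t(n'; n)| = |\Mon_t(k_1, \ldots, k_m; n)| \cdot \binom{n-k'}{n'-k'}_q$. The two $\binom{n-k'}{n'-k'}_q$ factors cancel and we recover the left-hand side $p_t(\delta_1, \ldots, \delta_m; \gamma)$.

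The main obstacle is the careful bookkeeping around the source-automorphism quotient in the definition of $\Mon_t$: both the forward composition $\psi \circ \bar\varphi$ and the inverse assignment $\psi_V^{-1} \circ \bar\varphi'$ must be shown to descend to the correct equivalence classes in $\Mon_t(k_1, \ldots, k_m; n)$ and $\Mon_t(k_1, \ldots, k_m; n')$ respectively. The mild case distinction between $t \geq 0$ (where the joint image is the linear subspace $\sum \varphi'_i(\FF_q^{k_i})$ of dimension $k'$) and $t = -1$ (where the joint image is a union of disjoint affine subspaces) should not pose an issue since Corollary 2.3 is stated uniformly across both regimes.
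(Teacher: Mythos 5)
Your argument is essentially the paper's own proof: both hinge on the same double-counting fact that each admissible tuple in $\Mon_t(k_1,\ldots,k_m;n)$ factors through exactly $\binom{n-k'}{n'-k'}_q$ intermediate elements of $\Mon_t(n';n)$, combined with \cref{eq:mon_doble_counting} to match the denominators. You merely run the computation from the right-hand side to the left and spell out the factorization bijection (and its well-definedness modulo source automorphisms) that the paper leaves implicit in its chain of equalities.
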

\begin{proof}
   Underlying this statement is a basic double counting argument. We have
    \begin{align*} 
        & \phantom{=} p_t(\delta_1, \ldots, \delta_m; \gamma) \, |\Mon_t(k_1, \ldots k_m; n)| \, \binom{n-k'}{n'-k'}_q\\
        & = \sum_{(\varphi_1, \ldots, \varphi_m) \in \Mon_t(k_1, \ldots, k_m; n)} \mathds{1}_{\gamma \circ \varphi_1 \cong_t \delta_1} \cdots \mathds{1}_{\gamma \circ \varphi_m \cong_t \delta_m} \, \binom{n-k'}{n'-k'}_q \\
        & = \sum_{\varphi' \in \Mon_t(n', n)} \sum_{(\varphi_1, \ldots, \varphi_m) \in \Mon_t(k_1, \ldots, k_m; n')} \mathds{1}_{\gamma \circ \varphi' \circ \varphi_1 \cong_t \delta_1} \cdots \mathds{1}_{\gamma \circ \varphi' \circ \varphi_m \cong_t \delta_m} \\
        & = \sum_{\beta \in \mF^\tau_{n'}}  \sum_{\varphi' \in \Mon_t(n', n)} \mathds{1}_{\gamma \circ \varphi' \cong_t \beta} \sum_{(\varphi_1, \ldots, \varphi_m) \in \Mon_t(k_1, \ldots, k_m; n')} \mathds{1}_{\beta \circ \varphi_1 \cong_t \delta_1} \cdots \mathds{1}_{\beta \circ \varphi_m \cong_t \delta_m} \\
        & = \sum_{\beta \in \mF^\tau_{n'}} p_t(\beta; \gamma) \, |\Mon_t(n'; n)| \, p_t(\delta_1, \ldots, \delta_m; \beta) \, |\Mon_t(k_1, \ldots, k_m; n')|,
    \end{align*}
    establishing the statement through \cref{eq:mon_doble_counting}.
\end{proof}

\subsection{Counting solutions in colorings}
 
For a given linear map $L$, we have already defined the set $\mS_L(T)$ of solutions with all-distinct entries in a subset $T \subseteq \FF_q^n$.
We will additionally need the set 
\begin{equation}
    \mS'_L(T) = \{\bs \in T^m : L(\bs) =  {\bf 0} \} \supseteq \mS_L(T).
\end{equation}
of all solutions with not necessarily distinct entries. The following lemma establishes that asymptotically it makes no difference whether one counts elements in $\mS_L(\FF_q^n)$ or in $\mS'_L(\FF_q^n)$.
\begin{proposition} \label{lemma:countsol}
    For any linear map $L$, we have $|\mS'_L(\FF_q^n)| = |\mS_L(\FF_q^n)| \, \big(1 + o(1) \big)$
\end{proposition}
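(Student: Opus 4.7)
The plan is to show that the ``degenerate'' solutions --- those in $\mS'_L(\FF_q^n)$ with at least one coincidence $s_i = s_j$ --- are confined to a bounded number of proper linear subspaces of codimension at least $n$ inside $\mS'_L(\FF_q^n)$, and hence are negligible as $n \to \infty$.

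First I would observe that $\mS'_L(\FF_q^n)$ is precisely the kernel of the $\FF_q$-linear map $(\FF_q^n)^m \to (\FF_q^n)^r$ obtained by applying $A$ block-wise to the $m$ coordinate slots of $\bs$. Since $A$ has rank $r$ by standing assumption, this kernel has dimension $n(m-r)$, and so $|\mS'_L(\FF_q^n)| = q^{n(m-r)}$.

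Next, for each pair $1 \leq i < j \leq m$ I would set $V_{ij} = \{\bs \in \mS'_L(\FF_q^n) : s_i = s_j\}$ and show that $|V_{ij}| = q^{n(m-r-1)}$. Indeed, $V_{ij}$ is the kernel of the augmented linear system obtained from $A\bs = 0$ by appending the $n$ scalar constraints $s_i - s_j = 0$, i.e.\ by adjoining the row vector $e_i - e_j \in \FF_q^m$ to $A$. The key step is the observation that $e_i - e_j$ cannot lie in the $\FF_q$-row span of $A$: if it did, then every element of $\mS'_L(\FF_q^n)$ would automatically satisfy $s_i = s_j$, and hence $\mS_L(\FF_q^n)$ would be empty, contradicting the standing assumption. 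Thus the augmented system has rank $n(r+1)$ and $V_{ij}$ has the claimed cardinality. This is the only place where a non-trivial argument is needed, and it reduces to a one-line application of the nondegeneracy assumption.

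A union bound over the $\binom{m}{2}$ pairs then yields
\[ |\mS'_L(\FF_q^n) \setminus \mS_L(\FF_q^n)| \leq \binom{m}{2} \, q^{n(m-r-1)} = O(q^{-n}) \cdot |\mS'_L(\FF_q^n)|, \]
from which $|\mS'_L(\FF_q^n)| = |\mS_L(\FF_q^n)|(1 + o(1))$ follows immediately. Everything beyond the codimension claim is routine linear-algebraic bookkeeping, so I do not anticipate any real obstacle.
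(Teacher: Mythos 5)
Your proof is correct and follows essentially the same route as the paper's: both arguments confine the degenerate solutions to finitely many coincidence patterns, each of which lies in a subspace of strictly smaller dimension and is therefore negligible. You are in fact slightly more careful than the paper, which only asserts that each repetition pattern loses a degree of freedom; your observation that $e_i - e_j$ cannot lie in the row span of $A$ precisely because $\mS_L(\FF_q^n) \neq \emptyset$ supplies the justification that assertion needs.
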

\begin{proof}
    Recall that the matrix $A \in \mM^{m \times r}$ associated with a linear map $L: \FF_q^m \to \FF_q^r$ is assumed to be of full rank, so that $|\mS'_L(T)| = \Theta(q^{n(m-r)})$. Each possible set of repetitions in the $m$ coordinates defines its own linear map that, with the exception of the repetition-free case, has strictly less than $m-r$ degrees of freedom and therefore $o(q^{n(m-r)})$ solutions. Since there is only a finite number of possible repetitions, asymptotically almost every element from $\mS'_L(\FF_q^n)$ also lies in $\mS_L(\FF_q^n)$.
\end{proof}
In order to develop the flag algebra approach, our notion of solutions will also need to fulfill an averaging equality, i.e., the density of solutions needs to be representable as the weighted density of particular flags, and it needs to be invariant under an appropriate notion of isomorphism.
This is unfortunately not true for $\mS_L(\FF_q^n)$ or $\mS'_L(\FF_q^n)$, so for any $t \geq -1$ and $n \geq t^+$ we introduce the \emphdef{$t$-fixed dimension} $\dim_t(s)$ of a solution $\bs \in \mS'_L(\FF_q^n)$ as the smallest $k \geq t^+$ for which there exists a $t$-fixed $k$-dimensional subspace of $\FF_q^n$ containing all entries of $\bs$. Note that for our purposes, we will only need the unfixed and $0$-fixed dimension, but the notions are easier to state in broader generality. We denote by
\begin{equation}
    \dim_t(L) = \max \{\dim_t(\bs) : \bs \in \mS_L'(\FF_q^n), n \geq t^+\}
\end{equation}
the largest $t$-fixed dimension of any solution to a given linear map $L$. Let us also say that a given linear map $L$ is \emphdef{invariant} if for any solution $\bs = (x_1, \ldots, x_m) \in \mS'_L(\FF_q^n)$ and element $a \in \FF_q^n$ we have $a + \bs = (a+x_1, \ldots, a+x_m) \in \mS'_L(\FF_q^n)$.\footnote{Note that APs clearly are invariant while Schur triples are not. In general a linear map will be invariant if and only if the columns of its associated matrix $A$ sum up to $\bf 0$ modulo $q$.} 
\begin{lemma} \label{lemma:dimbound}
    We have $\dim_t(L) = m-r+t$ for any linear map $L$ when $t \geq 0$ as well as $\dim_{-1}(L) = m-r-1$ when $L$ is invariant.
\end{lemma}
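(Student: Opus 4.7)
My plan is to split the argument by cases on whether $t \geq 0$ or $t = -1$, and in each case prove the upper bound by a rank argument and the matching lower bound by explicit construction. For $t \geq 0$, observe that any $t$-fixed subspace is a linear subspace of $\FF_q^n$ containing $e_1, \ldots, e_t$. Given a solution $\bs \in \mS'_L(\FF_q^n)$, the identity $A\bs = 0$ imposes $r$ linearly independent $\FF_q$-linear relations on the vectors $s_1, \ldots, s_m$, so their linear span has dimension at most $m - r$; consequently the smallest $t$-fixed subspace containing all of them has dimension at most $\dim \operatorname{span}(s_1, \ldots, s_m, e_1, \ldots, e_t) \leq m - r + t$.

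For the matching lower bound I would pick a basis $v_1, \ldots, v_{m-r}$ of $\ker A \subseteq \FF_q^m$ and, working in $\FF_q^n$ for any $n \geq m - r + t$, define $s_i = \sum_{j=1}^{m-r} (v_j)_i \, e_{t+j}$. A direct computation shows $A\bs = 0$, and since the matrix $V = [v_1 \mid \cdots \mid v_{m-r}]$ has column rank $m-r$ its rows span all of $\FF_q^{m-r}$, so the linear span of $s_1, \ldots, s_m$ is exactly $\operatorname{span}(e_{t+1}, \ldots, e_{t+m-r})$, which together with $e_1, \ldots, e_t$ spans a $t$-fixed subspace of dimension precisely $m - r + t$. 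For the case $t = -1$ with $L$ invariant, the extra ingredient is that invariance is equivalent to $(1, \ldots, 1)^T \in \ker A$. I would translate an arbitrary solution by $-s_1$ to produce the solution $(0, s_2 - s_1, \ldots, s_m - s_1)$, whose affine hull has the same dimension as that of $\bs$; the equation on $(t_2, \ldots, t_m) = (s_2 - s_1, \ldots, s_m - s_1)$ then reduces to $A' (t_2, \ldots, t_m)^T = 0$, where $A'$ is the submatrix of $A$ consisting of the last $m-1$ columns. Because the first column of $A$ equals the negative sum of the others, deleting it does not drop the rank, so $A'$ still has rank $r$ and the earlier argument gives $\dim \operatorname{span}(t_2, \ldots, t_m) \leq (m-1) - r = m - r - 1$; a completely analogous construction based on a basis of $\ker A'$ realizes this bound.

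I do not foresee any serious obstacle; the argument is elementary linear algebra. The one subtle point worth flagging is that the hypothesis of invariance in the $t = -1$ case is used exactly once, namely to ensure that $A'$ retains full row rank after deleting the first column—without invariance, this deletion could drop the rank to $r - 1$ and the resulting affine-span bound would be off by one.
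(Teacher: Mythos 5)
Your proof is correct and follows essentially the same route as the paper's: the full-rank hypothesis bounds the span of the entries by $m-r$, an explicit kernel-basis construction attains this bound, and for $t=-1$ translating by $-s_1$ (precisely the paper's $\bs \mapsto \bs - a$ step) together with invariance removes exactly one further dimension. You simply make explicit the rank computation and the attaining construction that the paper's two-sentence sketch leaves implicit.
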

\begin{proof}
    Recall that the matrix $A \in \mM^{m \times r}$ associated with a linear map $L: \FF_q^m \to \FF_q^r$ is assumed to be of full rank, so that we have $m-r$ degrees of freedom. When $t \geq 0$, these can be chosen to lie outside of the fixed $t$-dimensional subspace and together with it form a $t$-fixed $m-r+t$-dimensional subspace. When $t = -1$, the freedom given by the affine term reduces the degree of freedom by exactly $1$ for invariant $L$, as $\dim_{-1}(\bs) = \dim_{-1}(\bs - a)$ for any $a \in \FF_q^n$ and $\bs - a \in \mS'_L(\FF_q^n)$ by assumption.
\end{proof}
We therefore say that $L$ is \emphdef{admissible} if $t \geq 0$ or if $t=-1$ and $L$ is invariant. A solution $\bs \in \mS_L(\FF_q^n)$ for some admissible $L$ is \emphdef{$t$-fixed fully dimensional} if $\dim_t(s)$ attains the respective upper bound stated in \cref{lemma:dimbound}. 
For a given set $T \subseteq \FF_q^n$, we denote the set of fully dimensional solutions to some admissible $L$ by 
\begin{equation}
    \mS_L^t(T) = \{\bs \in \mS'_L(T) : \dim_t(\bs) = \dim_t(L) \}
\end{equation}
and also write $s_L^t(T) = |\mS_L^t(T)| \, / \, |\mS_L^t(\FF_q^n)| $. The important property that we will make use of is that each fully-dimensional solution defines a unique $\dim(L)$-dimensional $t$-fixed subspace in which it lies. Using this, let us establish that asymptotically it makes no difference whether we are counting elements in $\mS_L(\FF_q^n)$ or in $\mS^t_L(\FF_q^n)$ when $n$ tends to infinity.
\begin{proposition}\label{lemma:countfdsol}
    For any $t \geq -1$ and admissible linear map $L$, we have
    \begin{equation*}
        |\mS_L^t(\FF_q^n)| = |\mS(\FF_q^n)| \, \big(1 + o(1) \big).
    \end{equation*}
\end{proposition}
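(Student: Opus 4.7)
By \cref{lemma:countsol} we already know that $|\mS_L'(\FF_q^n)| = |\mS_L(\FF_q^n)|(1+o(1))$, and $\mS_L^t(\FF_q^n) \subseteq \mS_L'(\FF_q^n)$ by definition, so the plan is to show that the complement $\mS_L'(\FF_q^n) \setminus \mS_L^t(\FF_q^n)$ is a vanishing fraction of the whole. Since $A$ is of full rank $r$, the set $\mS_L'(\FF_q^n)$ is cut out by $nr$ independent linear conditions inside $(\FF_q^n)^m$ and hence forms a linear subspace of dimension $n(m-r)$, giving $|\mS_L'(\FF_q^n)| = q^{n(m-r)}$. It therefore suffices to bound, for each $t^+ \leq k' < \dim_t(L)$, the number $N_{k'}$ of solutions whose $t$-fixed dimension equals $k'$, and show that $N_{k'} = o(q^{n(m-r)})$.

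To estimate $N_{k'}$, every such solution lies in at least one $t$-fixed $k'$-dimensional subspace of $\FF_q^n$, so I would bound $N_{k'}$ from above by the product of the number of such subspaces and the maximum number of solutions contained in any one of them. \cref{lemma:count_morphisms} counts the subspaces: there are $\binom{n-t}{k'-t}_q = \Theta(q^{(k'-t)(n-k')})$ of them for $t \geq 0$, and $q^{n-k'}\binom{n}{k'}_q = \Theta(q^{(k'+1)(n-k')})$ for $t=-1$. Within each such subspace $V$, the number of solutions is exactly $q^{k'(m-r)}$: for $t \geq 0$ the subspace $V$ is linear of dimension $k'$ and the claim again follows from the full-rank assumption on $A$; for $t=-1$ the invariance of $L$ (built into the admissibility assumption) lets us translate the affine subspace $V$ to a linear one without changing the solution count.

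Multiplying the two factors and reading off the leading coefficient of $n$ in the exponent, we find that $N_{k'} = O(q^{(k'-t) n + C})$ for $t \geq 0$ and $N_{k'} = O(q^{(k'+1) n + C})$ for $t=-1$, where $C$ depends only on $k',t,q,m,r$. By \cref{lemma:dimbound}, the condition $k' < \dim_t(L)$ forces this coefficient to be at most $m-r-1$ in both cases. Hence each $N_{k'} = O(q^{n(m-r-1)+C}) = o(q^{n(m-r)})$, and summing over the finitely many admissible values of $k'$ completes the argument. The main subtlety is ensuring the $t=-1$ (affine) case matches the $t \geq 0$ (linear) one, which is exactly where the invariance hypothesis baked into the definition of admissibility plays a role.
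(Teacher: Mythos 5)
Your proof is correct and follows essentially the same route as the paper: both stratify the solutions in $\mS_L'(\FF_q^n)$ by their exact $t$-fixed dimension $k$, count each stratum via the containing $t$-fixed $k$-dimensional subspaces using \cref{lemma:count_morphisms}, and use \cref{lemma:dimbound} to conclude that every stratum with $k < \dim_t(L)$ contributes only $o(q^{n(m-r)})$ solutions before invoking \cref{lemma:countsol}. The only cosmetic difference is that the paper records each stratum as the exact product $|\Mon_t(k;n)|\,|\mS_L^{t,k}(\FF_q^k)|$ rather than as an upper bound of the form (number of subspaces) times (solutions per subspace), which changes nothing in the asymptotic conclusion.
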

\begin{proof}
    Let us write $d = \dim(L)$ and let $\mS_L^{t,k}(\FF_q^n) = \{ \bs \in \mS_L'(\FF_q^n) : \dim(\bs) = k \}$ denote the set of $k$-dimensional $t$-fixed solutions in $\FF_q^n$ for any $t^+ \leq k \leq d$. Since every solution $\bs \in \mS_L^{t,k} (\FF_q^n)$ defines a unique $k$-dimensional subspace of $\FF_q^n$ that fully contains its entries, it follows that $|\mS_L^{t,k} (\FF_q^n)| = |\Mon_t(k;n)| |\mS_L^{t,k} (\FF_q^k)|$. Using \cref{lemma:count_morphisms} as well as $|\mS_L^{t,k} (\FF_q^k)|  = \theta(1)$ and $\binom{n}{k}_q = \Theta(q^{kn})$ as $n$ tends to infinity, we therefore get
    \begin{equation*}
        |\mS_L^{t,k} (\FF_q^n)| = q^{n-k} \, \binom{n}{k}_q \, |\mS_L^{t,k} (\FF_q^k)| = \Theta(q^{(k-t) \, n}).
    \end{equation*}
    for any $t \geq -1$. It follows that $|\mS_L^{t,k} (\FF_q^n)| = o(|\mS_L^d (\FF_q^n)|)$ when $k < d$, and since $\mS_L^{t,d}(\FF_q^n) = \mS_L^t (\FF_q^n)$, it follows that $|\mS'_L(\FF_q^n)| = |\mS^t(\FF_q^n)| \, \big(1 + o(1) \big)$. The desired statement therefore therefore follows by \cref{lemma:countsol}.
\end{proof}
The following remark motivates why we consider both unfixed and $0$-fixed morphisms.
\begin{remark} \label{rem:solinvariance}
    For any $t \geq -1$, admissible linear map $L$, integer $n \geq \dim_t(L)$, $t$-fixed isomorphism $\varphi: \FF_q^n \to \FF_q^n$, and solution $(x_1, \ldots, x_m) \in \mS^t_L(\FF_q^n)$, we have $\big( \varphi(x_1), \ldots, \varphi(x_m) \big) \in \mS^t_L(\FF_q^n)$, i.e., the number of solutions in a subset of $\FF_q^n$ is invariant under $t$-fixed isomorphism. 
\end{remark}
The same would not hold for $t = -1$ if $L$ was not invariant. In general we therefore need to consider $0$-fixed morphisms, but whenever exclusively dealing with invariant structures, we can be more economical, as unfixed morphisms lead to a smaller number of isomorphism classes of colorings.
Finally, let us conclude this section by showing that fully-dimensional solutions satisfy the desired averaging property.
\begin{proposition} \label{lemma:solutiondoublecounting}
    Let $t \geq -1$ as well as an admissible linear map $L$ be given. For any integers $\dim_t(L) \leq k \leq n$, coloring $\gamma \in \Gamma^t(n)$, and color $1 \leq i \leq c$ , we have 
    \begin{equation*} \label{eq:solutiondoublecounting}
        s_L^t(\gamma^{(i)}) = \sum_{\delta \in \Gamma^t(k)} s_L^t(\delta^{(i)}) \, p_t (\delta; \gamma).
    \end{equation*}
\end{proposition}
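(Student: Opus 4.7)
The plan is to prove this by a double-counting argument analogous to \cref{lemma:colordoublecounting}, counting pairs $(\varphi, \bs)$ with $\varphi \in \Mon_t(k; n)$ and $\bs \in \mS_L^t(\gamma^{(i)})$ such that every entry of $\bs$ lies in $\varphi(\FF_q^k)$. Writing $d = \dim_t(L)$, the crucial structural point — and precisely the reason for restricting to $\mS_L^t$ rather than $\mS'_L$ in the statement — is that every fully-dimensional $\bs$ spans a \emph{unique} $d$-dimensional $t$-fixed subspace $U_{\bs}$ of $\FF_q^n$.

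Summing over $\bs$ first, the number of $\varphi$ whose image contains the entries of $\bs$ equals the number of $k$-dimensional $t$-fixed subspaces of $\FF_q^n$ containing $U_{\bs}$; a Gaussian coefficient identity of the same flavour as \cref{eq:mon_doble_counting} shows this to be a constant $N = N(n, k, d, t, q)$ independent of $U_{\bs}$. Summing instead over $\varphi$ and grouping monomorphisms by the isomorphism type $\delta \in \Gamma^t(k)$ of $\gamma \circ \varphi$, while invoking \cref{rem:solinvariance} to ensure that the number of fully-dimensional solutions inside $\varphi(\FF_q^k) \cap \gamma^{(i)}$ depends only on $\delta$, we obtain
\begin{equation*}
    N \cdot |\mS_L^t(\gamma^{(i)})| = \sum_{\delta \in \Gamma^t(k)} |\mS_L^t(\delta^{(i)})| \, p_t(\delta; \gamma) \, |\Mon_t(k; n)|.
\end{equation*}

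To close, we apply the exact same argument to the constant-$i$ coloring $\gamma'$ of $\FF_q^n$; every induced sub-coloring then collapses to the constant-$i$ coloring $\delta_0 \in \Gamma^t(k)$ with density $p_t(\delta_0;\gamma') = 1$, producing the auxiliary identity $N \cdot |\mS_L^t(\FF_q^n)| = |\mS_L^t(\FF_q^k)| \, |\Mon_t(k; n)|$. Dividing the two identities eliminates both $N$ and $|\Mon_t(k; n)|$ and, after recognising $|\mS_L^t(\delta^{(i)})| / |\mS_L^t(\FF_q^k)| = s_L^t(\delta^{(i)})$ and $|\mS_L^t(\gamma^{(i)})| / |\mS_L^t(\FF_q^n)| = s_L^t(\gamma^{(i)})$, yields the claimed averaging formula.

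The main obstacle I anticipate is verifying that $N$ really depends only on $(n, k, d, t, q)$. For $t \geq 0$ one quotients out the fixed $t$-dimensional subspace and reduces to the classical Gaussian count $\binom{n-d}{k-d}_q$ of linear subspace extensions. For $t = -1$ an extra translation step is required — legitimate thanks to the invariance of $L$ guaranteed by admissibility, the same ingredient which in \cref{lemma:dimbound} ensured uniqueness of the ambient affine subspace — after which the same linear extension count applies.
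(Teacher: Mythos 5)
Your argument is correct and rests on exactly the same key fact as the paper's proof, namely that every $t$-fixed fully-dimensional solution spans a unique $d$-dimensional $t$-fixed subspace, with $d = \dim_t(L)$. The organization differs slightly: the paper performs the double count only for $k = d$, where that uniqueness makes the subspaces partition the solutions (so your multiplicity constant is $N=1$ and no normalization via the constant coloring is needed), and then extends to arbitrary $k \geq d$ by composing with \cref{lemma:colordoublecounting}; you instead run the count directly for general $k$, absorbing the overcount into the constant $N = \binom{n-d}{k-d}_q$ and eliminating it against the identity for the all-$i$ coloring. Both routes are sound, and your verification that $N$ depends only on $(n,k,d,t,q)$ is the right thing to check; the one-shot version saves an appeal to the composition lemma at the cost of this extra subspace-extension count. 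One small misattribution: for $t=-1$ the translation step in counting affine $k$-flats containing a given affine $d$-flat is pure affine geometry and needs nothing from $L$; the invariance of $L$ supplied by admissibility is instead what makes $\dim_{-1}$ and \cref{rem:solinvariance} (and hence the well-definedness of $|\mS_L^t(\delta^{(i)})|$ on isomorphism classes) work, which you do also invoke in the right place.
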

\begin{proof}
    To shorten notation, let us write $d = \dim_t(L)$ as well as $\varphi^{-1}(T) = \{ x \in \FF_q^{d} : \varphi(x) \in T \}$ for any $\varphi \in \Mon_t(d;n)$. Since every $t$-fixed fully-dimensional solution defines a unique $d$-dimensional $t$-fixed subspace in which its entries lie, we have
    \begin{equation*}
        |\mS_L^t(\gamma^{(i)})| = \sum_{\varphi \in \Mon_t(d; n)} |\mS_L^t \big( (\gamma \circ \varphi)^{(i)} \big)| = \sum_{\delta \in \Gamma^t(d)} |\mS_L^t (\delta^{(i)})| \cdot |\{ \varphi \in \Mon_t(d;n) : \gamma \circ \varphi \cong_t \delta \}|.
    \end{equation*}
    In particular, this implies $|\mS_L^t(\FF_q^n)| = |\mS_L^t (\FF_q^d)| \, |\Mon_t(d;n)|$, so dividing both sides by $|S_L^t(\FF_q^n)|$ establishes the statement when $k = d$. \cref{lemma:colordoublecounting} extends it to any $k > d$.
\end{proof}

\section{Flag Algebras for Vector Spaces over Finite Fields} \label{sec:flagalgebras}

The theory of flag algebras is due to Razborov~\cite{razborov2007flag} and covers a particular subset of problems in extremal combinatorics dealing with the relation of asymptotic densities. Among other approaches, it allows one to derive lower bounds for the type of problem studied in this paper by formulating and solving a semi-definite program. This has been successfully applied to many longstanding open problems in extremal graph theory~\cite{razborov20103, FalgasVaughan_2013,grzesik2012maximum,baber2011hypergraphs,balogh2022spectrum,CummingsEtAl_2013,balogh2017rainbow} and there are several good introductions to the topic, for example by Silva et al.~\cite{SilvaEtAl_2016}. The purpose of this section is to present a succinct and self-contained version of this framework for our particular application along with some problem-specific nuances.

\subsection{Admissible parameters for flag algebras}\label{sec:admissible_parameters}

We assume that we are given a parameter $\lambda: \Gamma \to \RR$ for which there exists $t_\lambda \in \{-1, 0\}$ such that $\lambda$ is invariant under $t_\lambda$-fixed isomorphisms, meaning that $\lambda (\gamma) = \lambda(\gamma')$ whenever $\gamma \cong_{t_\lambda} \gamma'$, as well as an integer $n_\lambda \in \NN$ such that 
\begin{equation} \label{eq:avgrequirement}
    \lambda (\gamma) = \sum_{\beta \in \Gamma^{t_\lambda}(n)} \lambda(\beta) \, p_{t_\lambda}(\beta, \gamma)
\end{equation}
for all $\gamma \in \Gamma^{t_\lambda}$ and $n_\lambda \leq n \leq \dim(\gamma)$. Note that we established through \cref{rem:solinvariance} and \cref{lemma:solutiondoublecounting} in the previous section that the fraction of monochromatic fully-dimensional solutions to a given linear map $L$ define such a parameter with $t_\lambda = 0$ for general $L$ and $t_\lambda = -1$ for invariant ones, where in either case $n_\lambda = \dim_{t_\lambda}(L)$. Given $\lambda$, we are now interested in determining
\begin{equation} \label{eq:optimization_goal}
    \lambda^\star = \lim_{n \to \infty} \min_{\gamma \in \Gamma^{t_\lambda}(n)} \lambda (\gamma).
\end{equation}
The limit exists as \cref{eq:avgrequirement} implies monotonicity and it is easy to see that one has a trivial lower bound of $\lambda^\star \geq \min_{\delta \in \Gamma^{t_\lambda}(N)} \lambda (\delta)$ for any $N \geq n_\lambda$.

Before proceeding with a formal definition of flag algebras, let us note that everything introduced in this section can be easily extended to cover the case of colorings avoiding a finite set $\mX \subset \Gamma$ of forbidden colorings by simply considering the set $\Gamma_\mX$ colorings not containing any element of $\mX$ as a non-degenerate induced sub-coloring instead of $\Gamma$ throughout. This allows one to address Turán-type questions or even analogue problems to studying the maximum number of cliques in graphs with bounded independence number~\cite{Erdos_1962, Nikiforov_2001, DasEtAl_2013, PikhurkoVaughan_2013}. Since the problems stated in the introduction exclusively deal with the case of $\mX = \emptyset$, we do not further emphasize this point.

\subsection{A formal definition of flag algebras and their semantic cones}

For any $t \geq 0$, we refer to elements of $\Gamma^{t}(t) =  \Gamma(t)$ as \emphdef{types} of dimension $t$. We also introduce a unique \emphdef{empty type}, denoted by $\varnothing$, of dimension $t = -1$. For a given type $\tau$ of dimension $t$ and $n \geq t^+$, we refer to a coloring $F \in \Gamma(n)$ satisfying $F \circ \operatorname{id}_{t,n} \equiv \tau$ as a \emphdef{flag of type $\tau$} (or a \emphdef{$\tau$-flag}), where we recall that $\operatorname{id}_{t,n}$ denotes the unique $t$-fixed isomorphism from $\FF_q^t$ to $\FF_q^n$ and also note that the requirement is vacantly true for $t=-1$. We will write
\begin{equation*}
    \mF^\tau_n = \{ F \in \Gamma(n): F \circ \operatorname{id}_{t,n} \equiv \tau \} / \cong_t
\end{equation*}
for the set of all flags of given type $\tau$ and dimension $n$ up to $t$-fixed isomorphism, i.e., we again choose a canonical representative for each class. We will also write $\mF^\tau = \bigcup_{n \geq t^+} \mF^\tau$ for all $\tau$-flags of any dimension.
Using these notions, we can now give a formal definition of flag algebras. To stick to common notation in flag algebra literature, we will use $F, H, G$ rather than $\delta, \beta, \gamma$ to denote flags from this point on.

\begin{definition}
    For a given type $\tau$, its \emphdef{flag algebra} $\mA^\tau$ is given by considering $\RR \mF^\tau / \mK^\tau$, where 
    \begin{equation}
        \mK^\tau = \Big\{ F - \sum_{F' \in \mF_n^\tau } p_t (F;F') \, F' : F \in \mF^\tau, n \geq \dim(F) \Big\},
    \end{equation}
    and equipping it with the product given by the the bilinear extension of 
    \begin{equation}
        F_1 \cdot F_2 = \sum_{H \in \mF_n^\tau} p_t (F_1, F_2; H) \, H + \mK^\tau
    \end{equation}
    defined for any two flags $F_1, F_2 \in \mF^\tau$ and arbitrary $n \geq \dim(F_1) + \dim(F_2) - \dim (\tau)$.
\end{definition}

Note that the choice of $n$ in the definition of the product is indeed arbitrary by \cref{lemma:colordoublecounting} and that $\tau$ is the multiplicative unit element of $\mA^\tau$. We will omit $\tau$ along with $\mK^\tau$ from any algebraic expressions involving elements of $\mA^\tau$, so given a constant $c \in \RR$ we for example simply write $c$ for the element $c \, \tau + \mK^\tau$ in $\mA^\tau$. Elements of the algebra not directly identified with some flag will commonly be denoted by a lower case $f$.

From a combinatorial perspective, we care about \emphdef{convergent} sequences $(G_k)_{k \in \NN}$ in $\mF^\tau$, that is sequences of flags $G_k \in \mF^\tau$ where $\dim(G_k)$ tends to infinity and for which $\lim_{k \to \infty} p_t(F; G_k)$ exists for any $F \in \mF^\tau$. Note that by compactness every increasing sequence has a convergent subsequence~\cite[Theorem 3.2]{razborov2007flag}. The decisive property of flag algebras is that \emphdef{positive homomorphisms} $\phi$, that is algebra homomorphisms $\phi \in \operatorname{Hom}(\mA^\tau, \RR)$ satisfying $\phi(F) \geq 0$ for all $F \in \mF^\tau$, are in a one-to-one correspondence with \emphdef{limit functionals}, that is the linear extension of maps $f: \mF^\tau \to \RR$ defined through $f(F) = \lim_{k \to \infty} p(F; G_k)$ for some convergent sequence $(G_k)_{k \in \NN}$.

\begin{theorem}[Theorem 3.3 in~\cite{razborov2007flag}]
	Every limit functional is a positive homomorphism and every positive homomorphism is a limit functional.
\end{theorem}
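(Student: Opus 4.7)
The plan is to prove each direction in turn. For the forward direction, I would fix a convergent sequence $(G_k)_{k \in \NN}$ and set $f(F) = \lim_k p_t(F; G_k)$, extending linearly to $\RR \mF^\tau$. Showing that $f$ descends to $\mA^\tau$ reduces to checking that $f$ vanishes on each generator $F - \sum_{F'} p_t(F; F')\, F'$ of $\mK^\tau$, which follows directly from \cref{lemma:colordoublecounting} and the interchange of finite sums with limits. Nonnegativity on flags is immediate. Multiplicativity rewrites, using \cref{lemma:colordoublecounting} once more, as
\[
f(F_1 \cdot F_2) = \lim_k \sum_H p_t(F_1, F_2; H)\, p_t(H; G_k) = \lim_k p_t(F_1, F_2; G_k),
\]
and the whole question becomes whether this limit agrees with $\lim_k p_t(F_1; G_k)\, p_t(F_2; G_k)$. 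The crucial ingredient is the asymptotic factorization $p_t(F_1, F_2; G) = p_t(F_1; G)\, p_t(F_2; G) + o(1)$ as $\dim(G) \to \infty$, which I would prove by comparing, via the explicit formulas of \cref{lemma:count_morphisms}, the cardinality $|\Mon_t(k_1, k_2; n)|$ with the product $|\Mon_t(k_1; n)| \cdot |\Mon_t(k_2; n)|$ and showing that their ratio tends to $1$: two independent $t$-fixed monomorphisms overlap only in the $t$-fixed subspace with probability tending to $1$.

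For the converse, let $\phi$ be a positive homomorphism. Applying $\phi$ to the kernel relation $\tau - \sum_{F \in \mF^\tau_k} p_t(\tau; F)\, F \in \mK^\tau$ and using that $p_t(\tau; F) = 1$ for every $\tau$-flag, I would obtain $\phi(\tau) = \sum_{F \in \mF^\tau_k} \phi(F)$. Since $\tau$ is the multiplicative identity, $\phi(\tau) \in \{0, 1\}$; excluding the trivial zero functional, I may assume $\phi(\tau) = 1$, so that $\mu_k(F) := \phi(F)$ is a probability distribution on $\mF^\tau_k$ for every $k \geq t^+$. Sampling $G \sim \mu_k$, the identity $F = \sum_{G \in \mF^\tau_k} p_t(F; G)\, G$ inside $\mA^\tau$ gives $\EE_G[p_t(F; G)] = \phi(F)$ whenever $\dim(F) \leq k$, while the homomorphism property applied to $F \cdot F$ yields
\[
\phi(F)^2 = \phi(F \cdot F) = \sum_H p_t(F, F; H)\, \phi(H) = \EE_G[p_t(F, F; G)].
\]
Invoking the asymptotic factorization established above, this forces $\operatorname{Var}_G[p_t(F; G)] = o_k(1)$. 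After enumerating $\mF^\tau = (F_i)_{i \in \NN}$, Chebyshev's inequality together with a standard diagonalization then produces a sequence $G_k \in \mF^\tau_k$ with $p_t(F_i; G_k) \to \phi(F_i)$ for every $i$, yielding a convergent sequence whose associated limit functional agrees with $\phi$ on all flags and hence on $\mA^\tau$ by linearity.

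The principal technical obstacle in both directions is this asymptotic factorization of joint densities, i.e., the uniform asymptotic independence of two random $t$-fixed monomorphisms into $\FF_q^n$. Once that Gaussian-binomial estimate is in hand, everything else is a routine combination of linear algebra (verifying the homomorphism axioms on generators) and a first-/second-moment concentration argument. A minor subtlety worth noting is that the counting must be checked uniformly for $t = -1$ and $t \geq 0$, which is transparent from the notation of \cref{sec:fundamentals} but requires verifying the relevant estimates in both regimes.
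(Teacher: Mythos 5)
Your sketch is correct and follows the standard argument: the paper itself offers no proof here, deferring entirely to Razborov's Theorem 3.3, and your outline (chain rule to descend to the quotient, asymptotic independence of two random $t$-fixed monomorphisms for multiplicativity, and the first-/second-moment Chebyshev--diagonalization argument for the converse) is precisely the proof given in that reference, adapted to the counting formulas of \cref{lemma:count_morphisms}. The only point worth flagging is that the $o(1)$ in the factorization $p_t(F_1,F_2;G)=p_t(F_1;G)\,p_t(F_2;G)+o(1)$ must be uniform over all $G$ of a given dimension, which your monomorphism-counting argument does deliver since the error depends only on $n$ and not on the coloring.
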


Using this insight, let us write $\operatorname{Hom}^+(\mA^\tau, \RR)$ for the set of positive homomorphisms and
\begin{equation}
	\mS^\tau = \{f \in \mA^\tau: \phi(f) \geq 0 \text{ for all } \phi \in \operatorname{Hom}^+(\mA^\tau, \RR) \}
\end{equation}
for the \emphdef{semantic cone} of type $\tau$. Since the $\phi$ are algebra homomorphisms, we have $c \cdot f \in \mS^\tau$ as well as $f_1 + f_2 \in \mS^\tau$ for any $f, f_1, f_2 \in \mS^\tau$ and $c \in \RR_{\geq 0}$, that is $\mS^\tau$ is indeed a cone for any type $\tau$. We use the semantic cone $\mS^\tau$ to define a partial order in $\mA^\tau$, where $f_1 \geq f_2$ whenever $f_1 - f_2 \in \mS^\tau$. Writing
\begin{equation}
	C^\tau_\lambda = \sum_{\beta \in \mF^\tau_{n_\lambda}} \lambda(\beta) \, \beta,
\end{equation}
for any type $\tau$ of dimension $t_\lambda$ with $t_\lambda$ and $n_\lambda$ as stated at the beginning of \cref{sec:admissible_parameters}, our problem of determining $\lambda^\star$ in \cref{eq:optimization_goal} can now be restated through the conic optimization problem
\begin{equation} \label{eq:optimization_goal_cone}
	\lambda^\star = \max \{ \lambda' \in \RR: C^\tau_\lambda \geq \lambda' \text{ for all types } \tau \text{ of dimension }t_\lambda\}.
\end{equation}

As is the case for graphs, it is often more natural to construct an argument for some type $\tau$ not directly used in \cref{eq:optimization_goal_cone}. We will need to {\lq}pull{\rq} these arguments into the space of flags that are of interest to us. Given a type $\tau$ of dimension $t \geq t_\lambda$, we let $\tau_\lambda$ denote the unique type of dimension $0$ satisfying $\tau_\lambda(0) = \tau(0)$ if $t_\lambda = 0$ and the empty type $\varnothing$ if $t_\lambda = -1$. For a given flag $F \in \mF^\tau_n$, we now let $F \!\! \mid_{t_\lambda}$ denote the element in $\mF^{\tau_\lambda}_n$ obtained by considering $F$ as an element of $\Gamma^{t_\lambda}$, that is we forget about all fixed parts for $t_\lambda = -1$ and all but the fixed zero for $t_\lambda = 0$.
\begin{definition}
    Let a type $\tau$ of dimension $t \geq t_\lambda$ be given. The \emphdef{downward operator} $\llbracket \,\cdot\, \rrbracket_{t_\lambda} \!\!: \mA^{\tau} \to \mA^{\tau_\lambda}$ is given by the linear extension of $F \mapsto q_{t_\lambda} (F) \cdot F \!\! \mid_{t_\lambda}$ defined for any $F \in \mF^{\tau}_n$, where $q_{\lambda} (F)$ denotes the probability that a $t_\lambda$-fixed isomorphism of $\FF_q^n$ chosen uniformly at random turns $F\!\! \mid_{t_\lambda}$ into an element of $\mF^\tau_n$.
\end{definition}
The decisive property of the downward operator, besides being linear~\cite[Theorem 2.5]{razborov2007flag}, is that it is \emphdef{order preserving}, i.e., that $f \geq 0$ in $\mA^{\tau}$ implies $\llbracket f \rrbracket_{t_\lambda} \geq 0$ in $\mA^{\tau_\lambda}$, or equivalently $\llbracket \mS^{\tau} \rrbracket_{t_\lambda} \subseteq \mS^{\tau_\lambda}$~\cite[Theorem 3.1]{razborov2007flag}. Note that this is closely linked to the Cauchy-Schwarz inequality, which in the context of flag algebras can bee seen as stating $\llbracket f^2 \rrbracket_{t_\lambda} \geq \llbracket f \rrbracket_{t_\lambda}^2$~\cite[Theorem 3.14]{razborov2007flag}.

\subsection{How to obtain lower bounds through flag algebras}

One way of establishing that $C_\lambda^{\tau} \geq \lambda'$ for a specific type $\tau$ of dimension $t_\lambda \in \{-1,0\}$, is to find positive elements $f_1, \ldots, f_m \in \mS^{\tau}$ for which one can show that $C_\lambda^{\tau} - \lambda' \geq f_1 + \ldots + f_m$ holds in $\mA^\tau$. Not only does that establish a lower bound, but it can also tell us structural information about extremal convergent sequences, since we must have $\phi(f_i) = 0$ for any $1 \leq i \leq m$ and $\phi \in \operatorname{Hom}^+(\mA^\tau, \RR)$ satisfying $\phi(C_\lambda^{\tau}) = \lambda'$. One obvious candidate for positive elements are squares, so we can establish a lower bound by defining a set of types $\mT$ that satisfy $\tau'_\lambda = \tau$ for any $\tau' \in \mT$  as well as finite sets of algebra elements $\mB_{\tau'} \subset \mA^{\tau'}$, and verifying that
\begin{equation} \label{eq:sos-lower-bound}
	C_\lambda^{\tau} - \lambda' \geq  \sum_{\tau' \in \mT} \sum_{f \in \mB_{\tau'}} \llbracket f^2 \rrbracket_{t_\lambda}.
\end{equation}
Note that for $t_\lambda = -1$ we only need to establish this once for the empty type $\tau = \varnothing$, while for $t_\lambda = 0$ we in fact need to verify this independently for all $c$ types $\tau$ of dimension $0$. However, when the parameter at hand is invariant under permutations of the colors, as is the case for the results stated in the introduction, it is in fact sufficient to verify the above for one arbitrary but fixed type of dimension $0$.

In practice, \cref{eq:sos-lower-bound} will be verifiable over $\mF^\tau_N$ for some fixed (and reasonably small) $N \geq 0$. The types $\mT$ will have dimension $-1 \leq t \leq N-2$ and for each given type $\tau'$ of dimension $t$ the set $\mB_{\tau'}$ will consist of the cosets of linear combinations of flags in $\mF^{\tau'}_{\lfloor(N-t)/2\rfloor}$ if $t \geq 0$ and $\mF^{\tau'}_{N-1}$ when $t = -1$. Such expressions can be found using a well established connection between sum-of-squares (SOS) and semi-definite programming (SDP)~\cite{parrilo2003minimizing}. We will not emphasize these technical aspects as they have been explored already in other works.

\subsection{Explicit constructions for upper bounds} \label{sec:blowup}

Let us finally take a brief detour from flag algebras and explore how to obtain upper bounds for a parameter $\lambda$ satisfying the requirements specified in \cref{sec:admissible_parameters}. While different ways of obtaining upper bounds have been used in the past, by far the most powerful method seems to be an analogue of the simple graph blow-up.

\pagebreak

For any $n \geq 1$, $x = (x_1, \ldots, x_n) \in \FF_q^n$ and $S = \{i_1 < \ldots < i_k\} \subseteq [n]$, we let $x\!\!\restriction_S = (x_{i_1}, \ldots, x_{i_k}) \in \FF_q^k$ denote the restriction of $x$ to its entries with indices in $S$. When $k = 0$ and therefore $S = \emptyset$, we let $x\!\!\restriction_\emptyset = 0 \in \FF_q^0$. For $k \geq 0$ the $k$-fold \emphdef{blow-up} $\gamma^{[k]} \in  \Gamma^t(n + k)$ of a given coloring $\gamma \in \Gamma^t(n)$ with $t \geq -1$ and $n \geq t^+$ is now defined by
\begin{equation*}
    \gamma^{[k]}(x) = \gamma(x\!\!\restriction_n)
\end{equation*}
for any $x \in \FF_q^{n + k}$.
The decisive property of the blow-up is its invariance with respect to the degenerate densities of subcolorings. Let us introduce a degenerate version of the parameter $\lambda$ by writing 
\begin{equation}
     \lambda^d(\gamma) = \sum_{\beta \in \Gamma^{t_\lambda}(n_\lambda)} \lambda(\beta) \, p_{t_\lambda}^d (\beta, \gamma)
\end{equation}
for any coloring $\gamma \in \Gamma^{t_\lambda}$. The following lemma states that we can derive an upper bound for $\lambda^\star$ from any coloring.
\begin{lemma} \label{lemma:blow_ub}
    We have $\lambda^\star \leq \lambda^d(\gamma)$ for any coloring $\gamma \in \Gamma^{t_\lambda}$.
\end{lemma}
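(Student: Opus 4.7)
The plan is to take any coloring $\gamma \in \Gamma^{t_\lambda}$, say of dimension $n_0$, and show that the sequence of blow-ups $(\gamma^{[k]})_{k \geq 0}$ satisfies $\lim_{k \to \infty} \lambda(\gamma^{[k]}) = \lambda^d(\gamma)$. Since $\dim(\gamma^{[k]}) = n_0 + k$ tends to infinity, combining this with the definition of $\lambda^\star$ as a limit of minima in \cref{eq:optimization_goal} immediately gives the desired bound $\lambda^\star \leq \lambda^d(\gamma)$.

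First I would invoke the averaging hypothesis \cref{eq:avgrequirement} of $\lambda$ at dimension $n_\lambda$ to write
\[
\lambda(\gamma^{[k]}) \;=\; \sum_{\beta \in \Gamma^{t_\lambda}(n_\lambda)} \lambda(\beta) \, p_{t_\lambda}(\beta; \gamma^{[k]})
\]
for all $k$ with $n_0 + k \geq n_\lambda$. This reduces the task to showing, for every fixed $\beta \in \Gamma^{t_\lambda}(n_\lambda)$, that $p_{t_\lambda}(\beta; \gamma^{[k]}) \to p_{t_\lambda}^d(\beta; \gamma)$ as $k \to \infty$.

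The structural heart of the argument is a property of the blow-up that one should isolate as a short sublemma. Letting $\pi: \FF_q^{n_0 + k} \to \FF_q^{n_0}$ denote the projection onto the first $n_0$ coordinates, the defining relation $\gamma^{[k]}(x) = \gamma(x\!\!\restriction_{n_0}) = \gamma(\pi(x))$ directly yields $\gamma^{[k]} \circ \varphi = \gamma \circ (\pi \circ \varphi)$ for any morphism $\varphi: \FF_q^{n_\lambda} \to \FF_q^{n_0 + k}$. Moreover, for both $t_\lambda \in \{0, -1\}$ a $t_\lambda$-fixed morphism $\varphi \in \M_{t_\lambda}(n_\lambda; n_0 + k)$ is specified by a matrix (or affine-linear pair) whose first $n_0$ rows are independent of its remaining $k$ rows, so that a uniformly random such $\varphi$ projects to a uniformly random element of $\M_{t_\lambda}(n_\lambda; n_0)$. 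Consequently one obtains the exact identity
\[
p_{t_\lambda}^d(\beta; \gamma^{[k]}) \;=\; p_{t_\lambda}^d(\beta; \gamma)
\]
for every $k$, i.e.\ the degenerate density is invariant under blow-ups. Combining this invariance with \cref{lemma:asymptoticdensity} applied to the sequence $\gamma^{[k]}$ (whose dimension tends to infinity) yields $p_{t_\lambda}(\beta; \gamma^{[k]}) = p_{t_\lambda}^d(\beta; \gamma) \, (1 + o(1))$, which gives $\lim_{k\to\infty}\lambda(\gamma^{[k]}) = \lambda^d(\gamma)$ and closes the argument. The only step that truly requires verification is the uniformity claim about the distribution of $\pi \circ \varphi$, but this is a direct consequence of the explicit parametrization of $t_\lambda$-fixed (affine) linear maps, so I do not anticipate a substantial obstacle anywhere in the proof.
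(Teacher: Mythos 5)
Your proposal is correct and follows essentially the same route as the paper's proof: express $\lambda(\gamma^{[k]})$ via the averaging identity at dimension $n_\lambda$, pass to degenerate densities using \cref{lemma:asymptoticdensity}, and observe that degenerate densities (hence $\lambda^d$) are invariant under blow-up, so that $\lambda(\gamma^{[k]}) \to \lambda^d(\gamma)$. The only difference is that you spell out the projection argument behind the invariance $p_{t_\lambda}^d(\beta;\gamma^{[k]}) = p_{t_\lambda}^d(\beta;\gamma)$, which the paper simply asserts.
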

\begin{proof}
    Clearly $\lambda^\star \leq \liminf_{k \to \infty} \lambda(\gamma^{[k]})$ by definition of $\lambda^\star$. By \cref{lemma:asymptoticdensity}, we have $\lambda(\gamma^{[k]}) = \lambda^d(\gamma^{[k]}) (1 + o_k(1))$. Given any coloring $\delta \in \Gamma^t(k')$ with $k' \geq 0$, we have $p_{t_\lambda}^d(\delta, \gamma^{[k]}) = p_{t_\lambda}^d(\delta, \gamma)$ and therefore $\lambda^d(\gamma^{[k]}) = \lambda^d(\gamma)$, establishing the lemma.
\end{proof}
This basic argument will already allow us to establish tight bounds in some cases. When certain requriments are met, one can however iterate a given construction to obtain an improved upper bound. For a given $\gamma \in \Gamma^{t}(n)$ and $\beta \in \Gamma(k)$ with $t \geq -1$, $n \geq t^+$, and $k \geq 0$, the \emphdef{product construction} $\gamma \otimes \beta \in \Gamma^t(n + k)$ is given by
\begin{equation*}
    \gamma \otimes \beta (x) = \begin{cases}
    \beta(x \!\!\restriction_{\{n+1, \ldots n+k\}}) & \text{if } x \!\!\restriction_{\{1,\ldots,n\}} = 0, \\
    \gamma(x \!\!\restriction_{\{1,\ldots,n\}}) & \text{otherwise.}
    \end{cases}
\end{equation*}
We note that a similar notion to the product construction has previously been used in the additive setting when dealing with structures in $\ZZ_n$, see~\cite{LuPeng_2012, parczyk2022new}. In order to state the next proposition, let $\gamma_i = i \cdot \mathds{1}_0 +  (1-\mathds{1}_0) \cdot \gamma$ denote the coloring obtained from $\gamma$ for any $1 \leq i \leq c$ by setting $\gamma_i(0) = i$ and $\gamma_0 \in \Gamma(0)$ the coloring given by $\gamma_0(0) = \gamma(0)$.
\begin{proposition} \label{prop:it_ub}
    For any $\gamma \in \Gamma^{t_\lambda}(n)$ satisfying $\lambda^d(\gamma_i) = \lambda^d(\gamma)$ for all $1 \leq i \leq c$, we have
    \begin{equation} \label{eq:it_ub}
        \lambda^\star \leq \frac{|\M_{t_\lambda}(n_{\lambda}, n)| \, \lambda^d(\gamma) - \lambda^d(\gamma_0)}{|\M_{t_\lambda}(n_{\lambda}, n)| - 1}.
    \end{equation}
\end{proposition}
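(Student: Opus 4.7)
The plan is to iterate the product construction to build a family of colorings whose degenerate densities converge to the claimed bound from above, after which \cref{lemma:blow_ub} delivers the result. Define $\gamma^{(m)} \in \Gamma^{t_\lambda}(mn)$ inductively by $\gamma^{(0)} = \gamma_0$ and $\gamma^{(m)} = \gamma \otimes \gamma^{(m-1)}$ for $m \geq 1$. Unrolling, $\gamma^{(m)}(x_1, \ldots, x_m)$ equals $\gamma(x_j)$ where $j$ is the smallest index with $x_j \neq 0$ (and $\gamma(0)$ otherwise); in particular $\gamma^{(1)} = \gamma$ and by \cref{lemma:blow_ub} we have $\lambda^\star \leq \lambda^d(\gamma^{(m)})$ for every $m$.

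Writing $a_k = |\M_{t_\lambda}(n_\lambda, k)|$ and $\nu$ for the right-hand side of \cref{eq:it_ub}, the key claim to establish by induction on $m$ is the identity
\[
    a_{mn}\,\lambda^d\bigl(\gamma^{(m)}\bigr) \;=\; \lambda^d(\gamma_0) \;+\; (a_{mn} - 1)\,\nu.
\]
Rearranged, this says that in $\sum_{[\varphi]} \lambda(\gamma^{(m)} \circ \varphi)$ the single trivial class $[\varphi \equiv 0]$ contributes $\lambda^d(\gamma_0)$ while the remaining $a_{mn} - 1$ non-trivial classes contribute exactly $\nu$ on average. Granting this, $\lambda^d(\gamma^{(m)}) \to \nu$ as $m \to \infty$ since $a_{mn} \to \infty$, and combined with \cref{lemma:blow_ub} this yields the desired inequality. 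The base case $m = 1$ is merely a rearrangement of the definition of $\nu$.

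For the inductive step, partition $\M_{t_\lambda}(n_\lambda, mn)$ by whether $\pi \circ \varphi \equiv 0$, where $\pi: \FF_q^{mn} \to \FF_q^n$ is the projection onto the first block. In Case (A), $\pi \circ \varphi \equiv 0$, the morphism $\varphi$ factors through $\{0\}^n \times \FF_q^{(m-1)n}$, yielding a bijection between the $a_{(m-1)n}$ Case (A) classes and $\M_{t_\lambda}(n_\lambda, (m-1)n)$, together with $\gamma^{(m)} \circ \varphi = \gamma^{(m-1)} \circ \varphi'$ for the induced $\varphi'$; the inductive hypothesis then delivers a non-trivial contribution of $(a_{(m-1)n} - 1)\nu$. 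In Case (B), $\pi \circ \varphi \not\equiv 0$, there are $a_{mn} - a_{(m-1)n}$ classes, and the claim is that they contribute $\nu$ on average; combined with Case (A), the non-trivial total becomes $(a_{mn} - 1)\nu$ as required.

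The main obstacle is this averaging claim in Case (B). With $\psi := \pi \circ \varphi$ non-trivial and $W := \psi^{-1}(0)$, the coloring $\gamma^{(m)} \circ \varphi$ agrees with $\gamma \circ \psi$ off $W$, while on $W$ its values are given by $\gamma^{(m-1)}$ composed with the restriction of the remaining blocks of $\varphi$ to $W$. The hypothesis $\lambda^d(\gamma_i) = \lambda^d(\gamma)$ for all $1 \leq i \leq c$ precisely says that the degenerate density is insensitive to the color assigned at the origin of $\gamma$, and the argument aims to show that integrating over the residual freedom in the non-first blocks of $\varphi$ with $\psi$ held fixed effectively performs such an origin-resampling on $W$, so that the mean $\lambda$-value collapses to $\lambda(\gamma \circ \psi)$; summing over non-trivial equivalence classes $[\psi]$ then produces $(a_n - 1)\,\nu$ scaled by the fibre sizes to give exactly $(a_{mn} - a_{(m-1)n})\,\nu$. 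The delicate point is handling the source-isomorphism action on $\M_{t_\lambda}(n_\lambda, mn)$ under the product decomposition $\varphi = (\psi, \pi' \circ \varphi)$, since the orbits do not factor and the stabilizer of $\psi$ varies with its rank, requiring a careful orbit-counting argument to combine with the hypothesis.
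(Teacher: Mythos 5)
Your overall strategy coincides with the paper's: iterate the product construction, show that $\lambda^d$ of the iterates converges to the right-hand side of \cref{eq:it_ub}, and conclude via \cref{lemma:blow_ub}. The reduction to the inductive identity $a_{mn}\,\lambda^d(\gamma^{(m)}) = \lambda^d(\gamma_0) + (a_{mn}-1)\nu$, the observation that the base case is just the definition of $\nu$, and the passage to the limit are all fine. The problem is that the inductive step is not actually carried out. Everything of substance in this proposition lives in your Case (B): it is the only place where the hypothesis $\lambda^d(\gamma_i)=\lambda^d(\gamma)$ can enter, and it is precisely the content of the one displayed identity in the paper's proof, namely $\lambda^d(\gamma\otimes\gamma)=\lambda^d(\gamma)-\big(\lambda^d(\gamma_0)-\lambda^d(\gamma)\big)/|\M_{t_\lambda}(n_\lambda,n)|$. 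You describe what ``the argument aims to show'' and note that it ``requires a careful orbit-counting argument,'' but you never supply that argument, so the proposal stops exactly where the work begins.

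The gap is not merely cosmetic. For a morphism $\varphi$ with $\psi=\pi\circ\varphi\not\equiv 0$, the coloring $\gamma^{(m)}\circ\varphi$ differs from $\gamma\circ\psi$ on $W=\psi^{-1}(0)$, which is in general a positive-dimensional subspace of $\FF_q^{n_\lambda}$, and there it is recolored by a \emph{pattern} $\gamma^{(m-1)}\circ\rho$, not by a constant. The hypothesis only asserts that replacing the color of the single point $0\in\FF_q^n$ by any constant leaves $\lambda^d(\gamma)$ unchanged; it says nothing, for an individual $\psi$, about replacing the constant color $\gamma(0)$ on $W$ by an arbitrary pattern. So the ``origin-resampling'' heuristic cannot be applied fibre-by-fibre: the hypothesis must be deployed in aggregate after a further decomposition (for the solution-counting $\lambda$, for instance, one splits degenerate solutions of $\gamma^{(m)}$ according to which entries land in $0\times\FF_q^{(m-1)n}$ and uses that the number of degenerate solutions through $0$ whose remaining entries are monochromatic in color $i$ is independent of $i$, which is the combinatorial content of $\lambda^d(\gamma_i)=\lambda^d(\gamma)$). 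On top of this sits the orbit/stabilizer issue you flag yourself, since $\M_{t_\lambda}$ is taken up to source isomorphism and the diagonal action does not respect the decomposition $\varphi=(\psi,\pi'\circ\varphi)$. Until Case (B) is established, what you have is a correct plan (matching the paper's) rather than a proof.
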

\begin{proof}
    Since $\lambda^d(\gamma_i) = \lambda^d(\gamma)$ for all $1 \leq i \leq c$, we have 
    \begin{equation*}
        \lambda^d(\gamma \otimes \gamma) = \lambda^d(\gamma)  - \frac{\lambda^d(\gamma_0) - \lambda^d(\gamma)}{|\M_{t_\lambda}(n_\lambda, n)|}.
    \end{equation*}
    Iterating the product construction and considering the limit of $\lambda^d(\gamma \otimes \gamma \otimes \ldots \otimes \gamma)$ gives the desired statement.
\end{proof}

\section{Proofs of the Ramsey Multiplicity Statements} \label{sec:proofs}

All upper bounds rely either on \cref{lemma:blow_ub} or \cref{prop:it_ub} and the respective colorings were generally found computationally, either by checking all colorings up to a certain dimension exhaustively or through search heuristics as recently explored by Parczyk et al.~\cite{parczyk2022new} in the context of graph theory.  Certificates for lower bounds will be stated using the flag algebra framework. The coefficients were found by first numerically solving an SDP formulation using {\tt csdp}~\cite{csdp} and then either rounding an LDL-decomposition of the output to exact rational values or by solving an exact rational Linear Program (LP) using {\tt SoPlex}~\cite{GamrathAndersonBestuzhevaetal.2020, GleixnerSteffyWolter2015} whenever an exact value for $\lambda'$ needed to by met. In the latter case, the zero eigenvector space of the solution could in general be easily guessed from the numerical solution and enforced in the LP formulation. We will represent the certificate in its SOS form as in \cref{eq:sos-lower-bound}. We made an effort to represent the proofs in a self-contained way, but the code needed to verify some of the statements here can be found online at \url{github.com/FordUniver/rs_radomult_23}.

\subsection{Proof of \cref{prop:4apresult}}

\paragraph{Upper bound.} The value of $13/126$ is obtained from an iterative blow-up, that is by applying \cref{prop:it_ub} to the following $3$-dimensional $2$-coloring:
\medskip
\begin{center}

\begin{tikzpicture}
    \draw[line width=0.5pt, fill=white]    (0*\squaresize + 0*\padding, 0*\squaresize + 0*\padding) rectangle ++(\squaresize,\squaresize);
    \draw[line width=0.5pt, fill=white]    (1*\squaresize + 1*\padding, 0*\squaresize + 0*\padding) rectangle ++(\squaresize,\squaresize);
    \draw[line width=0.5pt, fill=black!30] (2*\squaresize + 2*\padding, 0*\squaresize + 0*\padding) rectangle ++(\squaresize,\squaresize);
    \draw[line width=0.5pt, fill=white]    (3*\squaresize + 3*\padding, 0*\squaresize + 0*\padding) rectangle ++(\squaresize,\squaresize);
    \draw[line width=0.5pt, fill=white] (4*\squaresize + 4*\padding, 0*\squaresize + 0*\padding) rectangle ++(\squaresize,\squaresize);
    \draw[line width=0.5pt, fill=white]    (0*\squaresize + 0*\padding, 1*\squaresize + 1*\padding) rectangle ++(\squaresize,\squaresize);
    \draw[line width=0.5pt, fill=white] (1*\squaresize + 1*\padding, 1*\squaresize + 1*\padding) rectangle ++(\squaresize,\squaresize);
    \draw[line width=0.5pt, fill=white]    (2*\squaresize + 2*\padding, 1*\squaresize + 1*\padding) rectangle ++(\squaresize,\squaresize);
    \draw[line width=0.5pt, fill=black!30]    (3*\squaresize + 3*\padding, 1*\squaresize + 1*\padding) rectangle ++(\squaresize,\squaresize);
    \draw[line width=0.5pt, fill=black!30]    (4*\squaresize + 4*\padding, 1*\squaresize + 1*\padding) rectangle ++(\squaresize,\squaresize);
    \draw[line width=0.5pt, fill=black!30] (0*\squaresize + 0*\padding, 2*\squaresize + 2*\padding) rectangle ++(\squaresize,\squaresize);
    \draw[line width=0.5pt, fill=white] (1*\squaresize + 1*\padding, 2*\squaresize + 2*\padding) rectangle ++(\squaresize,\squaresize);
    \draw[line width=0.5pt, fill=white] (2*\squaresize + 2*\padding, 2*\squaresize + 2*\padding) rectangle ++(\squaresize,\squaresize);
    \draw[line width=0.5pt, fill=black!30] (3*\squaresize + 3*\padding, 2*\squaresize + 2*\padding) rectangle ++(\squaresize,\squaresize);
    \draw[line width=0.5pt, fill=black!30]    (4*\squaresize + 4*\padding, 2*\squaresize + 2*\padding) rectangle ++(\squaresize,\squaresize);
    \draw[line width=0.5pt, fill=black!30] (0*\squaresize + 0*\padding, 3*\squaresize + 3*\padding) rectangle ++(\squaresize,\squaresize);
    \draw[line width=0.5pt, fill=black!30] (1*\squaresize + 1*\padding, 3*\squaresize + 3*\padding) rectangle ++(\squaresize,\squaresize);
    \draw[line width=0.5pt, fill=black!30] (2*\squaresize + 2*\padding, 3*\squaresize + 3*\padding) rectangle ++(\squaresize,\squaresize);
    \draw[line width=0.5pt, fill=white] (3*\squaresize + 3*\padding, 3*\squaresize + 3*\padding) rectangle ++(\squaresize,\squaresize);
    \draw[line width=0.5pt, fill=black!30]    (4*\squaresize + 4*\padding, 3*\squaresize + 3*\padding) rectangle ++(\squaresize,\squaresize);
    \draw[line width=0.5pt, fill=white] (0*\squaresize + 0*\padding, 4*\squaresize + 4*\padding) rectangle ++(\squaresize,\squaresize);
    \node at (0.5*\squaresize, 5.5*\squaresize) {$\ast$}; 
    \draw[line width=0.5pt, fill=white]    (1*\squaresize + 1*\padding, 4*\squaresize + 4*\padding) rectangle ++(\squaresize,\squaresize);
    \draw[line width=0.5pt, fill=white]    (2*\squaresize + 2*\padding, 4*\squaresize + 4*\padding) rectangle ++(\squaresize,\squaresize);
    \draw[line width=0.5pt, fill=black!30]    (3*\squaresize + 3*\padding, 4*\squaresize + 4*\padding) rectangle ++(\squaresize,\squaresize);
    \draw[line width=0.5pt, fill=black!30]    (4*\squaresize + 4*\padding, 4*\squaresize + 4*\padding) rectangle ++(\squaresize,\squaresize);
\end{tikzpicture}
\quad
\begin{tikzpicture}
    \draw[line width=0.5pt, fill=black!30]    (0*\squaresize + 0*\padding, 0*\squaresize + 0*\padding) rectangle ++(\squaresize,\squaresize);
    \draw[line width=0.5pt, fill=black!30]    (1*\squaresize + 1*\padding, 0*\squaresize + 0*\padding) rectangle ++(\squaresize,\squaresize);
    \draw[line width=0.5pt, fill=black!30] (2*\squaresize + 2*\padding, 0*\squaresize + 0*\padding) rectangle ++(\squaresize,\squaresize);
    \draw[line width=0.5pt, fill=black!30]    (3*\squaresize + 3*\padding, 0*\squaresize + 0*\padding) rectangle ++(\squaresize,\squaresize);
    \draw[line width=0.5pt, fill=white] (4*\squaresize + 4*\padding, 0*\squaresize + 0*\padding) rectangle ++(\squaresize,\squaresize);
    \draw[line width=0.5pt, fill=black!30]    (0*\squaresize + 0*\padding, 1*\squaresize + 1*\padding) rectangle ++(\squaresize,\squaresize);
    \draw[line width=0.5pt, fill=white] (1*\squaresize + 1*\padding, 1*\squaresize + 1*\padding) rectangle ++(\squaresize,\squaresize);
    \draw[line width=0.5pt, fill=black!30]    (2*\squaresize + 2*\padding, 1*\squaresize + 1*\padding) rectangle ++(\squaresize,\squaresize);
    \draw[line width=0.5pt, fill=black!30]    (3*\squaresize + 3*\padding, 1*\squaresize + 1*\padding) rectangle ++(\squaresize,\squaresize);
    \draw[line width=0.5pt, fill=black!30]    (4*\squaresize + 4*\padding, 1*\squaresize + 1*\padding) rectangle ++(\squaresize,\squaresize);
    \draw[line width=0.5pt, fill=black!30] (0*\squaresize + 0*\padding, 2*\squaresize + 2*\padding) rectangle ++(\squaresize,\squaresize);
    \draw[line width=0.5pt, fill=white] (1*\squaresize + 1*\padding, 2*\squaresize + 2*\padding) rectangle ++(\squaresize,\squaresize);
    \draw[line width=0.5pt, fill=black!30] (2*\squaresize + 2*\padding, 2*\squaresize + 2*\padding) rectangle ++(\squaresize,\squaresize);
    \draw[line width=0.5pt, fill=white] (3*\squaresize + 3*\padding, 2*\squaresize + 2*\padding) rectangle ++(\squaresize,\squaresize);
    \draw[line width=0.5pt, fill=black!30]    (4*\squaresize + 4*\padding, 2*\squaresize + 2*\padding) rectangle ++(\squaresize,\squaresize);
    \draw[line width=0.5pt, fill=black!30] (0*\squaresize + 0*\padding, 3*\squaresize + 3*\padding) rectangle ++(\squaresize,\squaresize);
    \draw[line width=0.5pt, fill=black!30] (1*\squaresize + 1*\padding, 3*\squaresize + 3*\padding) rectangle ++(\squaresize,\squaresize);
    \draw[line width=0.5pt, fill=white] (2*\squaresize + 2*\padding, 3*\squaresize + 3*\padding) rectangle ++(\squaresize,\squaresize);
    \draw[line width=0.5pt, fill=black!30] (3*\squaresize + 3*\padding, 3*\squaresize + 3*\padding) rectangle ++(\squaresize,\squaresize);
    \draw[line width=0.5pt, fill=black!30]    (4*\squaresize + 4*\padding, 3*\squaresize + 3*\padding) rectangle ++(\squaresize,\squaresize);
    \draw[line width=0.5pt, fill=white] (0*\squaresize + 0*\padding, 4*\squaresize + 4*\padding) rectangle ++(\squaresize,\squaresize);
    \draw[line width=0.5pt, fill=white]    (1*\squaresize + 1*\padding, 4*\squaresize + 4*\padding) rectangle ++(\squaresize,\squaresize);
    \draw[line width=0.5pt, fill=white]    (2*\squaresize + 2*\padding, 4*\squaresize + 4*\padding) rectangle ++(\squaresize,\squaresize);
    \draw[line width=0.5pt, fill=white]    (3*\squaresize + 3*\padding, 4*\squaresize + 4*\padding) rectangle ++(\squaresize,\squaresize);
    \draw[line width=0.5pt, fill=black!30]    (4*\squaresize + 4*\padding, 4*\squaresize + 4*\padding) rectangle ++(\squaresize,\squaresize);
\end{tikzpicture}
\quad
\begin{tikzpicture}
    \draw[line width=0.5pt, fill=white]    (0*\squaresize + 0*\padding, 0*\squaresize + 0*\padding) rectangle ++(\squaresize,\squaresize);
    \draw[line width=0.5pt, fill=white]    (1*\squaresize + 1*\padding, 0*\squaresize + 0*\padding) rectangle ++(\squaresize,\squaresize);
    \draw[line width=0.5pt, fill=white] (2*\squaresize + 2*\padding, 0*\squaresize + 0*\padding) rectangle ++(\squaresize,\squaresize);
    \draw[line width=0.5pt, fill=black!30]    (3*\squaresize + 3*\padding, 0*\squaresize + 0*\padding) rectangle ++(\squaresize,\squaresize);
    \draw[line width=0.5pt, fill=white] (4*\squaresize + 4*\padding, 0*\squaresize + 0*\padding) rectangle ++(\squaresize,\squaresize);
    \draw[line width=0.5pt, fill=black!30]    (0*\squaresize + 0*\padding, 1*\squaresize + 1*\padding) rectangle ++(\squaresize,\squaresize);
    \draw[line width=0.5pt, fill=black!30] (1*\squaresize + 1*\padding, 1*\squaresize + 1*\padding) rectangle ++(\squaresize,\squaresize);
    \draw[line width=0.5pt, fill=black!30]    (2*\squaresize + 2*\padding, 1*\squaresize + 1*\padding) rectangle ++(\squaresize,\squaresize);
    \draw[line width=0.5pt, fill=white]    (3*\squaresize + 3*\padding, 1*\squaresize + 1*\padding) rectangle ++(\squaresize,\squaresize);
    \draw[line width=0.5pt, fill=white]    (4*\squaresize + 4*\padding, 1*\squaresize + 1*\padding) rectangle ++(\squaresize,\squaresize);
    \draw[line width=0.5pt, fill=black!30] (0*\squaresize + 0*\padding, 2*\squaresize + 2*\padding) rectangle ++(\squaresize,\squaresize);
    \draw[line width=0.5pt, fill=white] (1*\squaresize + 1*\padding, 2*\squaresize + 2*\padding) rectangle ++(\squaresize,\squaresize);
    \draw[line width=0.5pt, fill=black!30] (2*\squaresize + 2*\padding, 2*\squaresize + 2*\padding) rectangle ++(\squaresize,\squaresize);
    \draw[line width=0.5pt, fill=white] (3*\squaresize + 3*\padding, 2*\squaresize + 2*\padding) rectangle ++(\squaresize,\squaresize);
    \draw[line width=0.5pt, fill=black!30]    (4*\squaresize + 4*\padding, 2*\squaresize + 2*\padding) rectangle ++(\squaresize,\squaresize);
    \draw[line width=0.5pt, fill=black!30] (0*\squaresize + 0*\padding, 3*\squaresize + 3*\padding) rectangle ++(\squaresize,\squaresize);
    \draw[line width=0.5pt, fill=white] (1*\squaresize + 1*\padding, 3*\squaresize + 3*\padding) rectangle ++(\squaresize,\squaresize);
    \draw[line width=0.5pt, fill=black!30] (2*\squaresize + 2*\padding, 3*\squaresize + 3*\padding) rectangle ++(\squaresize,\squaresize);
    \draw[line width=0.5pt, fill=black!30] (3*\squaresize + 3*\padding, 3*\squaresize + 3*\padding) rectangle ++(\squaresize,\squaresize);
    \draw[line width=0.5pt, fill=white]    (4*\squaresize + 4*\padding, 3*\squaresize + 3*\padding) rectangle ++(\squaresize,\squaresize);
    \draw[line width=0.5pt, fill=white] (0*\squaresize + 0*\padding, 4*\squaresize + 4*\padding) rectangle ++(\squaresize,\squaresize);
    \draw[line width=0.5pt, fill=white]    (1*\squaresize + 1*\padding, 4*\squaresize + 4*\padding) rectangle ++(\squaresize,\squaresize);
    \draw[line width=0.5pt, fill=black!30]    (2*\squaresize + 2*\padding, 4*\squaresize + 4*\padding) rectangle ++(\squaresize,\squaresize);
    \draw[line width=0.5pt, fill=black!30]    (3*\squaresize + 3*\padding, 4*\squaresize + 4*\padding) rectangle ++(\squaresize,\squaresize);
    \draw[line width=0.5pt, fill=white]    (4*\squaresize + 4*\padding, 4*\squaresize + 4*\padding) rectangle ++(\squaresize,\squaresize);
\end{tikzpicture}
\quad
\begin{tikzpicture}
    \draw[line width=0.5pt, fill=white]    (0*\squaresize + 0*\padding, 0*\squaresize + 0*\padding) rectangle ++(\squaresize,\squaresize);
    \draw[line width=0.5pt, fill=black!30]    (1*\squaresize + 1*\padding, 0*\squaresize + 0*\padding) rectangle ++(\squaresize,\squaresize);
    \draw[line width=0.5pt, fill=white] (2*\squaresize + 2*\padding, 0*\squaresize + 0*\padding) rectangle ++(\squaresize,\squaresize);
    \draw[line width=0.5pt, fill=white]    (3*\squaresize + 3*\padding, 0*\squaresize + 0*\padding) rectangle ++(\squaresize,\squaresize);
    \draw[line width=0.5pt, fill=black!30] (4*\squaresize + 4*\padding, 0*\squaresize + 0*\padding) rectangle ++(\squaresize,\squaresize);
    \draw[line width=0.5pt, fill=white]    (0*\squaresize + 0*\padding, 1*\squaresize + 1*\padding) rectangle ++(\squaresize,\squaresize);
    \draw[line width=0.5pt, fill=white] (1*\squaresize + 1*\padding, 1*\squaresize + 1*\padding) rectangle ++(\squaresize,\squaresize);
    \draw[line width=0.5pt, fill=black!30]    (2*\squaresize + 2*\padding, 1*\squaresize + 1*\padding) rectangle ++(\squaresize,\squaresize);
    \draw[line width=0.5pt, fill=white]    (3*\squaresize + 3*\padding, 1*\squaresize + 1*\padding) rectangle ++(\squaresize,\squaresize);
    \draw[line width=0.5pt, fill=black!30]    (4*\squaresize + 4*\padding, 1*\squaresize + 1*\padding) rectangle ++(\squaresize,\squaresize);
    \draw[line width=0.5pt, fill=white] (0*\squaresize + 0*\padding, 2*\squaresize + 2*\padding) rectangle ++(\squaresize,\squaresize);
    \draw[line width=0.5pt, fill=black!30] (1*\squaresize + 1*\padding, 2*\squaresize + 2*\padding) rectangle ++(\squaresize,\squaresize);
    \draw[line width=0.5pt, fill=black!30] (2*\squaresize + 2*\padding, 2*\squaresize + 2*\padding) rectangle ++(\squaresize,\squaresize);
    \draw[line width=0.5pt, fill=white] (3*\squaresize + 3*\padding, 2*\squaresize + 2*\padding) rectangle ++(\squaresize,\squaresize);
    \draw[line width=0.5pt, fill=white]    (4*\squaresize + 4*\padding, 2*\squaresize + 2*\padding) rectangle ++(\squaresize,\squaresize);
    \draw[line width=0.5pt, fill=black!30] (0*\squaresize + 0*\padding, 3*\squaresize + 3*\padding) rectangle ++(\squaresize,\squaresize);
    \draw[line width=0.5pt, fill=black!30] (1*\squaresize + 1*\padding, 3*\squaresize + 3*\padding) rectangle ++(\squaresize,\squaresize);
    \draw[line width=0.5pt, fill=white] (2*\squaresize + 2*\padding, 3*\squaresize + 3*\padding) rectangle ++(\squaresize,\squaresize);
    \draw[line width=0.5pt, fill=black!30] (3*\squaresize + 3*\padding, 3*\squaresize + 3*\padding) rectangle ++(\squaresize,\squaresize);
    \draw[line width=0.5pt, fill=black!30]    (4*\squaresize + 4*\padding, 3*\squaresize + 3*\padding) rectangle ++(\squaresize,\squaresize);
    \draw[line width=0.5pt, fill=black!30] (0*\squaresize + 0*\padding, 4*\squaresize + 4*\padding) rectangle ++(\squaresize,\squaresize);
    \draw[line width=0.5pt, fill=black!30]    (1*\squaresize + 1*\padding, 4*\squaresize + 4*\padding) rectangle ++(\squaresize,\squaresize);
    \draw[line width=0.5pt, fill=white]    (2*\squaresize + 2*\padding, 4*\squaresize + 4*\padding) rectangle ++(\squaresize,\squaresize);
    \draw[line width=0.5pt, fill=white]    (3*\squaresize + 3*\padding, 4*\squaresize + 4*\padding) rectangle ++(\squaresize,\squaresize);
    \draw[line width=0.5pt, fill=black!30]    (4*\squaresize + 4*\padding, 4*\squaresize + 4*\padding) rectangle ++(\squaresize,\squaresize);
\end{tikzpicture}
\quad
\begin{tikzpicture}
    \draw[line width=0.5pt, fill=white]    (0*\squaresize + 0*\padding, 0*\squaresize + 0*\padding) rectangle ++(\squaresize,\squaresize);
    \draw[line width=0.5pt, fill=white]    (1*\squaresize + 1*\padding, 0*\squaresize + 0*\padding) rectangle ++(\squaresize,\squaresize);
    \draw[line width=0.5pt, fill=white] (2*\squaresize + 2*\padding, 0*\squaresize + 0*\padding) rectangle ++(\squaresize,\squaresize);
    \draw[line width=0.5pt, fill=black!30]    (3*\squaresize + 3*\padding, 0*\squaresize + 0*\padding) rectangle ++(\squaresize,\squaresize);
    \draw[line width=0.5pt, fill=white] (4*\squaresize + 4*\padding, 0*\squaresize + 0*\padding) rectangle ++(\squaresize,\squaresize);
    \draw[line width=0.5pt, fill=white]    (0*\squaresize + 0*\padding, 1*\squaresize + 1*\padding) rectangle ++(\squaresize,\squaresize);
    \draw[line width=0.5pt, fill=white] (1*\squaresize + 1*\padding, 1*\squaresize + 1*\padding) rectangle ++(\squaresize,\squaresize);
    \draw[line width=0.5pt, fill=black!30]    (2*\squaresize + 2*\padding, 1*\squaresize + 1*\padding) rectangle ++(\squaresize,\squaresize);
    \draw[line width=0.5pt, fill=white]    (3*\squaresize + 3*\padding, 1*\squaresize + 1*\padding) rectangle ++(\squaresize,\squaresize);
    \draw[line width=0.5pt, fill=black!30]    (4*\squaresize + 4*\padding, 1*\squaresize + 1*\padding) rectangle ++(\squaresize,\squaresize);
    \draw[line width=0.5pt, fill=white] (0*\squaresize + 0*\padding, 2*\squaresize + 2*\padding) rectangle ++(\squaresize,\squaresize);
    \draw[line width=0.5pt, fill=white] (1*\squaresize + 1*\padding, 2*\squaresize + 2*\padding) rectangle ++(\squaresize,\squaresize);
    \draw[line width=0.5pt, fill=white] (2*\squaresize + 2*\padding, 2*\squaresize + 2*\padding) rectangle ++(\squaresize,\squaresize);
    \draw[line width=0.5pt, fill=white] (3*\squaresize + 3*\padding, 2*\squaresize + 2*\padding) rectangle ++(\squaresize,\squaresize);
    \draw[line width=0.5pt, fill=black!30]    (4*\squaresize + 4*\padding, 2*\squaresize + 2*\padding) rectangle ++(\squaresize,\squaresize);
    \draw[line width=0.5pt, fill=white] (0*\squaresize + 0*\padding, 3*\squaresize + 3*\padding) rectangle ++(\squaresize,\squaresize);
    \draw[line width=0.5pt, fill=black!30] (1*\squaresize + 1*\padding, 3*\squaresize + 3*\padding) rectangle ++(\squaresize,\squaresize);
    \draw[line width=0.5pt, fill=white] (2*\squaresize + 2*\padding, 3*\squaresize + 3*\padding) rectangle ++(\squaresize,\squaresize);
    \draw[line width=0.5pt, fill=white] (3*\squaresize + 3*\padding, 3*\squaresize + 3*\padding) rectangle ++(\squaresize,\squaresize);
    \draw[line width=0.5pt, fill=white]    (4*\squaresize + 4*\padding, 3*\squaresize + 3*\padding) rectangle ++(\squaresize,\squaresize);
    \draw[line width=0.5pt, fill=black!30] (0*\squaresize + 0*\padding, 4*\squaresize + 4*\padding) rectangle ++(\squaresize,\squaresize);
    \draw[line width=0.5pt, fill=white]    (1*\squaresize + 1*\padding, 4*\squaresize + 4*\padding) rectangle ++(\squaresize,\squaresize);
    \draw[line width=0.5pt, fill=black!30]    (2*\squaresize + 2*\padding, 4*\squaresize + 4*\padding) rectangle ++(\squaresize,\squaresize);
    \draw[line width=0.5pt, fill=black!30]    (3*\squaresize + 3*\padding, 4*\squaresize + 4*\padding) rectangle ++(\squaresize,\squaresize);
    \draw[line width=0.5pt, fill=black!30]    (4*\squaresize + 4*\padding, 4*\squaresize + 4*\padding) rectangle ++(\squaresize,\squaresize);
  \end{tikzpicture}
\end{center}
\smallskip
Note that we have highlighted the 0 element whose color can be chosen freely with an asterisk symbol. Verifying the requirements of \cref{prop:it_ub} and evaluating \cref{eq:it_ub} is computationally straightforward.

\paragraph{Lower bound.} In order to state a certificate for the lower bound, let us define some variable names for the relevant flags corresponding to types of dimension $-1$ and $0$ in a graphical way:
\medskip
\begin{center}
    \begin{tabular}{cl}
        \multicolumn{2}{c}{Flags of type $\varnothing$} \bigskip \\
        $F_{1}$ & \drawsquares{white, white, white, white, white} \medskip \\
        $F_{2}$ & \drawsquares{white, white, white, white, black!30} \medskip \\
        $F_{3}$ & \drawsquares{white, black!30, black!30, black!30, black!30} \medskip \\
        $F_{4}$ & \drawsquares{black!30, black!30, black!30, black!30, black!30}
    \end{tabular}
    \qquad
    \begin{tabular}{cl}
        \multicolumn{2}{c}{Flags of type \scalebox{0.6}{\drawemphsquares{white}}} \bigskip \\
        $F_{1,1}$ & \drawemphsquares{white, white, white, white, white} \medskip \\
        $F_{1,2}$ & \drawemphsquares{white, white, white, white, black!30} \medskip \\
        $F_{1,3}$ & \drawemphsquares{white, white, black!30, black!30, black!30}\medskip  \\
        $F_{1,4}$ & \drawemphsquares{white, black!30, black!30, black!30, black!30}
    \end{tabular}
    \qquad
    \begin{tabular}{cl}
        \multicolumn{2}{c}{Flags of type \scalebox{0.6}{\drawemphsquares{black!30}}} \bigskip \\
        $F_{2,1}$ & \drawemphsquares{black!30, black!30, black!30, black!30, black!30} \medskip \\
        $F_{2,2}$ & \drawemphsquares{black!30, black!30, black!30, black!30, white} \medskip \\
        $F_{2,3}$ & \drawemphsquares{black!30, black!30, white, white, white} \medskip \\
        $F_{2,4}$ & \drawemphsquares{black!30, white, white, white, white}
    \end{tabular}
\end{center}
\medskip
A proof of the lower bound now follows by first computationally verifying that 
\begin{align*}
    F_1 + F_4 + (F_2 + F_3) / 5 - 1/10 & \geq  \sum_{i=1}^2 \Big( 9/10 \cdot \big\llbracket \big(F_{i,1} + ( 5 \, F_{i,2} - 5 \, F_{i,3} - 10 \, F_{i,4} ) / 27 \big)^2 \big\rrbracket_{-1} \\
    & \phantom{\geq} \quad \ldots + 61/162 \cdot \big\llbracket \big( (F_{i,3} -  F_{i,2}) / 2 + F_{i,4} \big)^2 \big\rrbracket_{-1}  \Big)
\end{align*}
 holds over all elements in $\Gamma_{5,2}^{-1}(2)$, that is all unfixed $2$-colorings of $2$-dimensions. Obtaining all $3324$ such colorings in an efficient way is computationally not entirely trivial but still reasonable with some insights from a practical approach to the graph isomorphism problem~\cite{McKay_1981, McKay_1998, McKayPiperno_2014}. This already implies that $m_{5,2}(L_{4\text{-AP}}) \geq 1/10$. In order to see that this in fact needs to be a strict inequality, let   $\phi$ be a hypothetical limit functional satisfying $\phi(F_1 + F_4 + (F_2 + F_3) / 5) = 1/10$. The summands on the right-hand side imply that $\phi(F_{i,3} -  F_{i,2} + 2 \, F_{i,4}) = 0$ and therefore $\phi(F_{i,1}) = 0$ holds for $i \in \{1,2\}$, which in turn implies $\phi(F_{1}) = \phi(F_{4}) = 0$. This however contradicts the fact that $m_{5,2}(L_{5\text{-AP}}) > 0$, establishing that $m_{5,2}(L_{4\text{-AP}}) > 1/10$.

\subsection{Proof of \cref{thm:3ap3colorresult}}

\paragraph{Upper bound.} The upper bound is obtained from a simple blow-up, that is by applying \cref{lemma:blow_ub} to the following $3$-dimensional $3$-coloring:
\medskip
\begin{center}

\begin{tikzpicture}
    \draw[line width=0.5pt, fill=black!30]    (0*\squaresize + 0*\padding, 0*\squaresize + 0*\padding) rectangle ++(\squaresize,\squaresize);
    \draw[line width=0.5pt, fill=black!30]    (1*\squaresize + 1*\padding, 0*\squaresize + 0*\padding) rectangle ++(\squaresize,\squaresize);
    \draw[line width=0.5pt, fill=black!80] (2*\squaresize + 2*\padding, 0*\squaresize + 0*\padding) rectangle ++(\squaresize,\squaresize);
    \draw[line width=0.5pt, fill=white]    (0*\squaresize + 0*\padding, 1*\squaresize + 1*\padding) rectangle ++(\squaresize,\squaresize);
    \draw[line width=0.5pt, fill=white] (1*\squaresize + 1*\padding, 1*\squaresize + 1*\padding) rectangle ++(\squaresize,\squaresize);
    \draw[line width=0.5pt, fill=black!30]    (2*\squaresize + 2*\padding, 1*\squaresize + 1*\padding) rectangle ++(\squaresize,\squaresize);
    \draw[line width=0.5pt, fill=white] (0*\squaresize + 0*\padding, 2*\squaresize + 2*\padding) rectangle ++(\squaresize,\squaresize);
    \draw[line width=0.5pt, fill=white] (1*\squaresize + 1*\padding, 2*\squaresize + 2*\padding) rectangle ++(\squaresize,\squaresize);
    \draw[line width=0.5pt, fill=black!30] (2*\squaresize + 2*\padding, 2*\squaresize + 2*\padding) rectangle ++(\squaresize,\squaresize);
\end{tikzpicture}
\qquad
\begin{tikzpicture}
    \draw[line width=0.5pt, fill=black!80]    (0*\squaresize + 0*\padding, 0*\squaresize + 0*\padding) rectangle ++(\squaresize,\squaresize);
    \draw[line width=0.5pt, fill=black!80]    (1*\squaresize + 1*\padding, 0*\squaresize + 0*\padding) rectangle ++(\squaresize,\squaresize);
    \draw[line width=0.5pt, fill=white] (2*\squaresize + 2*\padding, 0*\squaresize + 0*\padding) rectangle ++(\squaresize,\squaresize);
    \draw[line width=0.5pt, fill=black!30]    (0*\squaresize + 0*\padding, 1*\squaresize + 1*\padding) rectangle ++(\squaresize,\squaresize);
    \draw[line width=0.5pt, fill=black!30] (1*\squaresize + 1*\padding, 1*\squaresize + 1*\padding) rectangle ++(\squaresize,\squaresize);
    \draw[line width=0.5pt, fill=black!80]    (2*\squaresize + 2*\padding, 1*\squaresize + 1*\padding) rectangle ++(\squaresize,\squaresize);
    \draw[line width=0.5pt, fill=black!30] (0*\squaresize + 0*\padding, 2*\squaresize + 2*\padding) rectangle ++(\squaresize,\squaresize);
    \draw[line width=0.5pt, fill=black!30] (1*\squaresize + 1*\padding, 2*\squaresize + 2*\padding) rectangle ++(\squaresize,\squaresize);
    \draw[line width=0.5pt, fill=black!80] (2*\squaresize + 2*\padding, 2*\squaresize + 2*\padding) rectangle ++(\squaresize,\squaresize);
\end{tikzpicture}
\qquad
\begin{tikzpicture}
    \draw[line width=0.5pt, fill=white]    (0*\squaresize + 0*\padding, 0*\squaresize + 0*\padding) rectangle ++(\squaresize,\squaresize);
    \draw[line width=0.5pt, fill=white]    (1*\squaresize + 1*\padding, 0*\squaresize + 0*\padding) rectangle ++(\squaresize,\squaresize);
    \draw[line width=0.5pt, fill=black!30] (2*\squaresize + 2*\padding, 0*\squaresize + 0*\padding) rectangle ++(\squaresize,\squaresize);
    \draw[line width=0.5pt, fill=black!80]    (0*\squaresize + 0*\padding, 1*\squaresize + 1*\padding) rectangle ++(\squaresize,\squaresize);
    \draw[line width=0.5pt, fill=black!80] (1*\squaresize + 1*\padding, 1*\squaresize + 1*\padding) rectangle ++(\squaresize,\squaresize);
    \draw[line width=0.5pt, fill=white]    (2*\squaresize + 2*\padding, 1*\squaresize + 1*\padding) rectangle ++(\squaresize,\squaresize);
    \draw[line width=0.5pt, fill=black!80] (0*\squaresize + 0*\padding, 2*\squaresize + 2*\padding) rectangle ++(\squaresize,\squaresize);
    \draw[line width=0.5pt, fill=black!80] (1*\squaresize + 1*\padding, 2*\squaresize + 2*\padding) rectangle ++(\squaresize,\squaresize);
    \draw[line width=0.5pt, fill=white] (2*\squaresize + 2*\padding, 2*\squaresize + 2*\padding) rectangle ++(\squaresize,\squaresize);
\end{tikzpicture}
\end{center}
Note that this coloring is in fact a Ramsey (or Rado) coloring of $\FF_3^3$, meaning that it avoids any non-degenerate monochromatic copies of $3$-APs. The matching lower bound in fact implies that no such coloring can exist of $\FF_3^4$. This kind of connection between the Ramsey multiplicity problem and Ramsey numbers has previously been explored by Lidick{\'y} and Pfender~\cite{lidicky2021semidefinite}.

\paragraph{Lower bound.} Let us again define some variable names for the relevant flags corresponding to types of dimension $-1$ and $0$ in a graphical way:
\medskip
\begin{center}
    \begin{tabular}{cl}
        \multicolumn{2}{c}{Flags of type $\varnothing$} \bigskip \\
        $F_{1}$ & \drawsquares{white, white, white} \medskip \\
        $F_{2}$ & \drawsquares{black!30, black!30, black!30} \medskip \\
        $F_{3}$ & \drawsquares{black!80, black!80, black!80} \medskip \\\medskip  \\\medskip  \\
    \end{tabular}
    \qquad
    \begin{tabular}{cl}
        \multicolumn{2}{c}{Flags of type \scalebox{0.6}{\drawemphsquares{white}}} \bigskip \\
        $F_{1,1}$ & \drawemphsquares{white, white, white} \medskip \\
        $F_{1,2}$ & \drawemphsquares{white, black!30, black!30} \medskip \\
        $F_{1,3}$ & \drawemphsquares{white, white, black!30} \medskip \\
        $F_{1,4}$ & \drawemphsquares{white, black!30, black!80} \medskip \\
        $F_{1,5}$ & \drawemphsquares{white, white, black!80} 
    \end{tabular}
    \qquad
    \begin{tabular}{cl}
        \multicolumn{2}{c}{Flags of type \scalebox{0.6}{\drawemphsquares{black!30}}} \bigskip \\
        $F_{2,1}$ & \drawemphsquares{black!30, black!30, black!30} \medskip \\
        $F_{2,2}$ & \drawemphsquares{black!30, black!80, black!80} \medskip \\
        $F_{2,3}$ & \drawemphsquares{black!30, black!30, black!80} \medskip \\
        $F_{2,4}$ & \drawemphsquares{black!30, black!80, white} \medskip \\
        $F_{2,5}$ & \drawemphsquares{black!30, black!30, white} 
    \end{tabular}
    \qquad
    \begin{tabular}{cl}
        \multicolumn{2}{c}{Flags of type \scalebox{0.6}{\drawemphsquares{black!80}}} \bigskip \\
        $F_{3,1}$ & \drawemphsquares{black!80, black!80, black!80} \medskip  \\
        $F_{3,2}$ & \drawemphsquares{black!80, white, white} \medskip \\
        $F_{3,3}$ & \drawemphsquares{black!80, black!80, white} \medskip  \\
        $F_{3,4}$ & \drawemphsquares{black!80, white, black!30} \medskip \\
        $F_{3,5}$ & \drawemphsquares{black!80, black!80, black!30} 
    \end{tabular}
    \qquad
\end{center}
\medskip
A proof of the lower bound now follows by computationally verifying that 
\begin{align*}
    F_i - 1/27 & \geq 26/27 \cdot \big\llbracket \big( F_{i,1} -99/182 \, F_{i,2} + 75/208 \, F_{i,3} -11/28 \, F_{i,4} -3/26 \, F_{i,5} \big)^2 \big\rrbracket_{-1} \\
    & \phantom{\geq} \quad \ldots + 1685/1911 \cdot \big\llbracket \big( F_{i,2} - 231/26960 \, F_{i,3} + 1703/6740 \,  F_{i,4} -1869/3370 \, F_{i,5}  \big)^2 \big\rrbracket_{-1} \\
    & \phantom{\geq} \quad \ldots + 71779/431360 \cdot \big\llbracket \big( F_{i,3} -358196/502453 \, F_{i,4} -412904/502453 \, F_{i,5}  \big)^2 \big\rrbracket_{-1} \\
    & \phantom{\geq} \quad \ldots + 5431408/10551513 \cdot \big\llbracket \big( F_{i,4} - 1/4 \, F_{i,5}  \big)^2 \big\rrbracket_{-1}
\end{align*}
holds over all elements in $\Gamma_{3,3}^{-1}(2)$ for any $i \in \{1, 2, 3\}$, that is all unfixed $3$-colorings of $2$-dimensions. Obtaining all $140$ such colorings is computationally straightforward.

\subsection{Proofs of remaining statements}

\paragraph{$5$-APs in $\FF_5^n$. }  The $1/126$ upper bound for $m_{5,2}(A_{5\textrm{-AP}})$ is derived from an iterative blow-up by applying \cref{prop:it_ub} to the same 3-dimensional $2$-coloring as used for the $m_{5,2}(A_{4\textrm{-AP}})$ upper bound. This overlapping use of constructions is also sometimes observed in graph theory, see for example some of the results in~\cite{PikhurkoVaughan_2013, parczyk2022new}.

\paragraph{Schur triple in $\FF_2^n$.} The upper bound of $1/16$ for $m_{2,3}(A_\text{Schur})$ is obtained from a simple blow-up, that is by applying \cref{lemma:blow_ub} to the following $2$-dimensional $3$-coloring:
\medskip
\begin{center}
\begin{tikzpicture}
    \draw[line width=0.5pt, fill=black!30]    (0*\squaresize + 0*\padding, 0*\squaresize + 0*\padding) rectangle ++(\squaresize,\squaresize);
    \draw[line width=0.5pt, fill=black!80]    (1*\squaresize + 1*\padding, 0*\squaresize + 0*\padding) rectangle ++(\squaresize,\squaresize);
    \draw[line width=1.5pt, fill=white]    (0*\squaresize + 0*\padding, 1*\squaresize + 1*\padding) rectangle ++(\squaresize,\squaresize);
    \draw[line width=0.5pt, fill=black!30] (1*\squaresize + 1*\padding, 1*\squaresize + 1*\padding) rectangle ++(\squaresize,\squaresize);
\end{tikzpicture}
\end{center}
The lower bound of $0.041258$ was obtained using the flag algebra framework over all $6546$ colorings of dimension $N = 4$. Since no precise target was met and the number of types and flags involved is too large, we do not reproduce the certificate at this point and instead point to the code associated with this paper.

\medskip

\paragraph{Schur triple in $\FF_3^n$.} The upper bound of $7/81$ for $m_{3,3} (A_\text{Schur})$ is obtained from a simple blow-up, that is by applying \cref{lemma:blow_ub} to the following $2$-dimensional $3$-coloring:
\medskip
\begin{center}
\begin{tikzpicture}
    \draw[line width=0.5pt, fill=black!30]    (0*\squaresize + 0*\padding, 0*\squaresize + 0*\padding) rectangle ++(\squaresize,\squaresize);
    \draw[line width=0.5pt, fill=black!30]    (1*\squaresize + 1*\padding, 0*\squaresize + 0*\padding) rectangle ++(\squaresize,\squaresize);
    \draw[line width=0.5pt, fill=black!80] (2*\squaresize + 2*\padding, 0*\squaresize + 0*\padding) rectangle ++(\squaresize,\squaresize);
    \draw[line width=0.5pt, fill=white]    (0*\squaresize + 0*\padding, 1*\squaresize + 1*\padding) rectangle ++(\squaresize,\squaresize);
    \draw[line width=0.5pt, fill=black!30] (1*\squaresize + 1*\padding, 1*\squaresize + 1*\padding) rectangle ++(\squaresize,\squaresize);
    \draw[line width=0.5pt, fill=black!80]    (2*\squaresize + 2*\padding, 1*\squaresize + 1*\padding) rectangle ++(\squaresize,\squaresize);
    \draw[line width=1.5pt, fill=white] (0*\squaresize + 0*\padding, 2*\squaresize + 2*\padding) rectangle ++(\squaresize,\squaresize);
    \draw[line width=0.5pt, fill=white] (1*\squaresize + 1*\padding, 2*\squaresize + 2*\padding) rectangle ++(\squaresize,\squaresize);
    \draw[line width=0.5pt, fill=black!80] (2*\squaresize + 2*\padding, 2*\squaresize + 2*\padding) rectangle ++(\squaresize,\squaresize);
\end{tikzpicture}
\end{center}
We note that there are other $2$-dimensional $3$-coloring giving the same value and that we do not believe this upper bound to be tight, but did not spend any significant time and resources trying to improve the construction.

\section{Concluding Remarks and Open Problems} \label{sec:remarks}

There is a long history of applying tools and techniques from graph theory to less studied problems in additive combinatorics. Adapting them to fit a slightly different context not only advances our understanding in the additive setting but also potentially enhances our general knowledge of these tools.
In this paper, we studied an additive analogue to the Ramsey multiplicity problem in vector spaces over finite fields, where our approach involved using blow-up type constructions and developing the flag algebra method for this specific setting. Key aspects included establishing an appropriate notion of when colorings of vector spaces over finite fields are isomorphic, considering the potential invariance of the linear system, and defining a natural notion of solutions amenable to double counting.

There are numerous avenues to explore that would build on these results. First of all, it would obviously  be of interest to try and obtain a lower bound matching the constructive upper bound in \cref{prop:4apresult} as well as stability results in \cref{thm:3ap3colorresult}. Going beyond the problems studied here, it may be of interested to study the multiplicity of higher dimensional affine subspaces, since $q$-AP in $\FF_q^n$ form $1$-dimensional affine subspaces. Developing different but related notions of morphisms for the grid $[q]^n$ could also enable us to apply the framework both to more geometric problems as well as to problems concerning vertex-colorings of the hypercube. A more ambitious goal would be to extend the flag algebra framework to additive structures in other groups where substructures are not as clearly defined and well-behaved as in vector spaces over finite fields.

We note that the SDP-based flag algebra approach however faces a much steeper hurdle compared to similar problems in graph theory: since the number of underlying objects grows exponentially rather than polynomially (respectively in the dimension and the order), one very quickly faces severe computational limitations. At the same time, this lead to the certificates presented in this paper being compact enough (with respect to the number of variables but not the number of constraints) to be presented in written form. Rounding the results of the SDP solver, commonly one of the biggest technical hurdles with these types of proofs, likewise was reasonably straight-forward due to this reason, even when no matching upper bound was known. Nevertheless, we think it is worthwhile to continue to develop this framework for problems beyond graph theory.

\bigskip

\noindent \textbf{Acknowledgements.} We would like to thank Oriol Serra for helpful discussions concerning existing results in the additive setting as well as Pascal Schweitzer for valuable input on the isomorphism problem. This work was partially funded by the Deutsche Forschungsgemeinschaft (DFG, German Research Foundation) under Germany's Excellence Strategy – The Berlin Mathematics Research Center MATH+ (EXC-2046/1, project ID: 390685689), the grants MTM2017-82166-P, PID2020-113082GBI00, and the Severo Ochoa and Mar{\'i}a de Maeztu Program for Centers and Units of Excellence in R\&D (CEX2020-001084-M).

\end{document}